\newcommand{\impli}{\Rightarrow}
\newcommand{\N}{\mathbb{N}}
\newcommand{\Nat}{\mathbb{N}}
\newcommand{\cS}{\mathcal{S}}
\newcommand{\cG}{\mathcal{G}}
\newcommand{\cH}{\mathcal{H}}
\def\epsilon{\varepsilon}
\newcommand{\sub}{\subseteq}
\newtheorem{theo}{Theorem}[section]
\newtheorem{lem}[theo]{Lemma}
\newtheorem{pro}[theo]{Proposition}
\newtheorem{cor}[theo]{Corollary}
\newtheorem{defi}[theo]{Definition}
\newtheorem{rem}[theo]{Remark}
\newtheorem{exa}[theo]{Example}
\numberwithin{equation}{section}
\author{Jos\'e Rodr\'{i}guez}
\address{Dpto. de Ingenier\'{i}a y Tecnolog\'{i}a de Computadores,
Facultad de Inform\'{a}tica, Universidad de Murcia, 30100 Espinardo (Murcia), Spain}
\email{joserr@um.es}
\subjclass[2010]{Primary: 46B50. Secondary: 47B07}
\keywords{Mackey topology; Ces\`{a}ro convergence; Banach-Saks property; strongly super weakly compactly generated space; Lebesgue-Bochner space}
\thanks{Research supported by projects MTM2014-54182-P and MTM2017-86182-P (AEI/FEDER, UE)}
\title{Ces\`{a}ro convergent sequences in the Mackey topology}
\begin{document}

\begin{abstract}
A Banach space $X$ is said to have property~($\mu^s$) if every weak$^*$-null sequence in~$X^*$
admits a subsequence such that all of its subsequences are Ces\`{a}ro convergent to~$0$ with respect to the Mackey topology.
This is stronger than the so-called property~(K) of Kwapie\'{n}. We prove that property~$(\mu^s)$ holds for 
every subspace of a Banach space which is strongly generated by an operator with Banach-Saks adjoint
(e.g. a strongly super weakly compactly generated space). 
The stability of property~$(\mu^s)$ under $\ell^p$-sums is discussed.
For a family $\mathcal{A}$ of relatively weakly compact subsets of~$X$, we consider the weaker property~$(\mu_\mathcal{A}^s)$ which 
only requires uniform convergence on the elements of~$\mathcal{A}$, and we give some applications to Banach lattices
and Lebesgue-Bochner spaces. We show that every Banach lattice with order continuous norm and weak unit has property $(\mu_\mathcal{A}^s)$ for the family of all
$L$-weakly compact sets. This sharpens a result of de Pagter, Dodds and Sukochev.
On the other hand, we prove that $L^1(\nu,X)$ (for a finite measure~$\nu$) has property~$(\mu_\mathcal{A}^s)$ for the family 
of all $\delta\mathcal{S}$-sets whenever $X$ is a subspace of a strongly super weakly compactly generated space. 
\end{abstract}

\maketitle

\section{Introduction}

A subset~$C$ of a Banach space~$X$ is said to be {\em Banach-Saks} if every sequence $(x_n)_{n}$ in~$C$
admits a Ces\`{a}ro convergent subsequence $(x_{n_j})_{j}$, i.e. the sequence 
of arithmetic means $(\frac{1}{k}\sum_{j=1}^k x_{n_j})_{k}$ is convergent (in the norm topology) to some element of~$X$. 
A Banach space~$X$ is said to have the {\em Banach-Saks property} if its closed unit ball $B_X$
is a Banach-Saks set. An operator $T:Y \to X$ between Banach spaces is said to be {\em Banach-Saks} if so is $T(B_Y)$. 
Every Banach-Saks set is relatively weakly compact (see e.g. \cite[Proposition~2.3]{lop-alt}) and so every space having the Banach-Saks property
is reflexive~\cite{nis-wat}, and every Banach-Saks operator is weakly compact. The converse statements are not true in general~\cite{bae}. 
Every super-reflexive space (like $L^p(\nu)$ for a non-negative measure~$\nu$ and $1<p<\infty$) has the Banach-Saks property (see e.g.
\cite[p.~124]{die-J}). For any non-negative measure~$\nu$, the space $L^1(\nu)$
enjoys the {\em weak Banach-Saks property}, that is, every weakly compact subset of~$L^1(\nu)$ is Banach-Saks, a result due to Szlenk~\cite{szl} (cf. \cite[p.~112]{die-J}). 
At this point it is convenient to recall the Erdös-Magidor theorem~\cite{erd-mag} (cf. \cite[Corollary~2.6]{lop-alt} and \cite[Theorem~2.1]{pou}) which implies,
in particular, that every sequence in a Banach-Saks set admits a subsequence such that all of its subsequences
are Ces\`{a}ro convergent to the same limit:

\begin{theo}[Erdös-Magidor]\label{theo:ErdosMagidor}
Every bounded sequence $(x_n)_n$ in a Banach space~$X$ admits a subsequence~$(x_{n_j})_j$ such that
\begin{enumerate}
\item[(i)] either all subsequences of $(x_{n_j})_j$ are Ces\`{a}ro convergent (to the same limit); 
\item[(ii)] or no subsequence of $(x_{n_j})_j$ is Ces\`{a}ro convergent.
\end{enumerate}
\end{theo}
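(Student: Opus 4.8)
The plan is to recast the statement in the language of infinite-dimensional Ramsey theory, identifying subsequences of $(x_n)_n$ with infinite subsets of~$\N$. For $M=\{m_1<m_2<\cdots\}\in[\N]^{\omega}$ write $\mu_k(M):=\frac1k\sum_{j=1}^k x_{m_j}$ for the $k$-th Ces\`{a}ro mean of the associated subsequence, and set
\[
\cG:=\bigl\{M\in[\N]^{\omega}:(\mu_k(M))_k\text{ converges in norm}\bigr\}.
\]
The goal is to produce an infinite set $A$ that is \emph{homogeneous} for~$\cG$, i.e. either $[A]^{\omega}\sub\cG$ or $[A]^{\omega}\cap\cG=\emptyset$; enumerating $A$ increasingly then yields the desired subsequence $(x_{n_j})_j$, realizing alternative~(i) or~(ii) respectively (the ``same limit'' clause being postponed to the last step).

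First I would check that $\cG$ is Borel in $[\N]^{\omega}$, endowed with its usual Polish topology. For fixed~$k$ the map $M\mapsto(m_1,\dots,m_k)$ depends only on a finite initial segment of the increasing enumeration of~$M$, hence $M\mapsto\mu_k(M)$ is locally constant and $M\mapsto\|\mu_k(M)-\mu_l(M)\|$ is continuous. Boundedness of $(x_n)_n$ is used only to guarantee that the means stay bounded. Writing the Cauchy condition as
\[
\cG=\bigcap_{p}\bigcup_{N}\bigcap_{k,l\ge N}\bigl\{M:\|\mu_k(M)-\mu_l(M)\|\le \tfrac1p\bigr\}
\]
exhibits $\cG$ as a $\Pi^0_3$ set, in particular Borel. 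With this in hand I would invoke the Galvin--Prikry theorem (every Borel subset of $[\N]^{\omega}$ is Ramsey) to obtain an infinite homogeneous set~$A$. If $[A]^{\omega}\cap\cG=\emptyset$ we are in case~(ii). If $[A]^{\omega}\sub\cG$ then every subsequence of $(x_a)_{a\in A}$ is Ces\`{a}ro convergent, and it only remains to see that all these limits coincide.

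This last point is where the real work lies. Suppose two infinite sets $P,Q\sub A$ had Ces\`{a}ro limits $\ell_P\neq\ell_Q$. Using the elementary fact that convergence of the full means $\mu_k\to\ell$ forces any block average $\frac{1}{k_2-k_1}\sum_{j=k_1+1}^{k_2}$ to approach~$\ell$ as soon as $k_2\to\infty$ with $k_1/k_2\to0$, I would build an infinite set $R\sub P\cup Q\sub A$ by concatenating consecutive blocks drawn alternately from~$P$ and from~$Q$, each lying beyond the previous one and so much longer than all its predecessors combined that the running mean of~$R$ at the end of each block is dominated by that block. The running means of~$R$ would then oscillate between neighbourhoods of~$\ell_P$ and of~$\ell_Q$, so $R\notin\cG$, contradicting $[A]^{\omega}\sub\cG$. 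Hence all limits agree, giving the ``same limit'' strengthening in case~(i). The main obstacle is thus not the Ramsey input (which is quoted) but the careful bookkeeping of block lengths and thresholds in this oscillation argument, together with the routine---but necessary---verification that $R$ is genuinely an increasing enumeration, i.e. a bona fide subsequence.
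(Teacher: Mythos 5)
The paper does not actually prove this theorem---it is quoted from Erd\H{o}s--Magidor \cite{erd-mag} (cf.\ \cite{lop-alt,pou})---so there is no internal proof to compare against, and your reconstruction via the Galvin--Prikry theorem (Borelness, indeed $\Pi^0_3$-ness, of the set of Ces\`{a}ro-convergent subsequences, extraction of a homogeneous set, then an interleaved-blocks oscillation argument to force a common limit in case~(i)) is exactly the route of the cited original proof. The outline is sound, and the remaining details are precisely the ones you flag yourself: each block must consist of consecutive elements of~$P$ (resp.~$Q$) in its own increasing enumeration, chosen beyond all previously used elements and long enough that---using the boundedness of $(x_n)_n$ both for the block-average fact and to damp the contribution of earlier blocks---the running mean of~$R$ at the end of that block is trapped near $\ell_P$ (resp.~$\ell_Q$), so that $R\in[A]^{\omega}$ fails to be Ces\`{a}ro convergent, the desired contradiction.
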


As we will see, for a {\em finite} measure~$\nu$, the weak Banach-Saks property of~$L^1(\nu)$ yields a somehow similar property for its dual 
$L^1(\nu)^*=L^\infty(\nu)$ by considering the $w^*$-topology and the Mackey topology, namely:
{\em every $w^*$-null sequence in~$L^\infty(\nu)$ admits a subsequence such that all of its subsequences are Ces\`{a}ro convergent to~$0$ 
with respect to $\mu(L^\infty(\nu),L^1(\nu))$.} Recall that, for an arbitrary
Banach space~$X$, the Mackey topology $\mu(X^*,X)$ is the (locally convex) topology on~$X^*$ of uniform convergence 
on all weakly compact subsets of~$X$.
So, for any finite measure~$\nu$, the space $L^1(\nu)$ satisfies the following property which is the main object of study of this paper:

\begin{defi}\label{defi:StarIntro}
A Banach space $X$ is said to have {\em property $(\mu^s)$} if every $w^*$-null sequence in~$X^*$
admits a subsequence such that all of its subsequences are Ces\`{a}ro convergent to~$0$ with respect to $\mu(X^*,X)$.
\end{defi}

The paper is organized as follows. In the preliminary Section~\ref{section:Preliminaries} we point out that property~$(\mu^s)$
is stronger than the so-called property~(K) invented by Kwapie\'{n} in connection with
some results of Kalton and Pe\l cz\'{y}nski~\cite{kal-pel}:

\begin{defi}\label{defi:K}
A Banach space $X$ is said to have {\em property~(K)} if every $w^*$-null sequence in~$X^*$
admits a convex block subsequence which converges to~$0$ with respect to $\mu(X^*,X)$.
\end{defi}

Property~(K) (and some variants)
have been also studied by Frankiewicz and Plebanek~\cite{fra-ple}, 
Figiel, Johnson and Pe\l cz\'{y}nski~\cite{fig-alt2}, de Pagter, Dodds and Sukochev~\cite{dep-dod-suk}, Avil\'{e}s and the author~\cite{avi-rod}.
In Section~\ref{section:Preliminaries} we also give some basic examples of Banach spaces having property~$(\mu^s)$.
For a reflexive space~$X$, $(\mu^s)$ is equivalent to the Banach-Saks property of~$X^*$ (Proposition~\ref{pro:Reflexive}). In particular, any 
super-reflexive space has~$(\mu^s)$. For a $C(L)$ space (where $L$ is a compact Hausdorff topological space), 
$(\mu^s)$ is equivalent to the fact that $C(L)$ is Grothendieck (Proposition~\ref{pro:CK}). So, for instance,
$\ell_\infty$ has property~$(\mu^s)$. 

In Section~\ref{section:StronglyGenerated} we discuss the role of ``strong generation'' in the study of property~$(\mu^s)$. To be more precise
we need some terminology:

\begin{defi}\label{defi:SGfamilies}
Let $X$ be a Banach space and let $\mathcal{H}$ and $\mathcal{G}$ be two families of subsets of~$X$. We say that 
$\mathcal{H}$ is {\em strongly generated} by~$\mathcal{G}$ if for every $H\in\mathcal{H}$ 
and every $\varepsilon>0$ there is $G\in \mathcal{G}$ such that $H \sub G + \varepsilon B_X$.
If in addition $\mathcal{G}=\{nG_0:n\in \Nat\}$ for some $G_0\sub X$, we simply say
that $\mathcal{H}$ is {\em strongly generated} by~$G_0$.
\end{defi}

We will be mainly interested in the case $\mathcal{H}=wk(X)$, the family of all weakly compact subsets of the Banach space~$X$.

\begin{defi}\label{defi:SGoperator}
Let $X$ be a Banach space and let $\mathcal{G}$ be a family of subsets of~$X$. We say that $X$ is {\em strongly generated by~$\mathcal{G}$}
if $wk(X)$ is strongly generated by~$\mathcal{G}$. If in addition
$\mathcal{G}=\{nT(B_Y):n\in \Nat\}$ for some operator $T:Y \to X$ from a Banach space~$Y$, we say that $Y$ {\em strongly generates}~$X$
or that $T$ {\em strongly generates}~$X$.
\end{defi}

Banach spaces which are strongly generated by a reflexive space (i.e. SWCG spaces) 
or by a super-reflexive space have been widely studied, 
see e.g. \cite{fab-mon-ziz,kam-mer2,mer-sta-2} and the references therein.
All SWCG spaces and their subspaces have property~(K), see \cite[Corollary~2.3]{avi-rod}. 
We show that property~$(\mu^s)$ is enjoyed by every subspace of a Banach space which is strongly generated by an operator 
with Banach-Saks adjoint (Theorem~\ref{theo:Generation}).  
This assumption is satisfied by the so-called strongly super weakly compactly generated spaces (S$^2$WCG) studied 
recently by Raja~\cite{raj9} and Cheng et al.~\cite{che2}. In particular, any Banach space which is strongly generated by a super-reflexive space 
(e.g. $L^1(\nu)$ for a finite measure~$\nu$) has property~$(\mu^s)$. 

We prove that a SWCG space~$X$ has property $(\mu^s)$ if (and only if) every $w^*$-null sequence in~$X^*$
admits a subsequence which is Ces\`{a}ro convergent to~$0$ with respect to $\mu(X^*,X)$. 
We do not know whether such equivalence holds for arbitrary Banach spaces. The case of SWCG spaces
is generalized to Banach spaces which are strongly generated by less than~$\mathfrak{p}$ weakly compact sets (Theorem~\ref{theo:p}).
Recall that~$\mathfrak{p}$ is the least cardinality of a family $\mathcal{M}$ of infinite
subsets of~$\N$ such that:
\begin{itemize}
\item $\bigcap \mathcal{N}$ is infinite for every finite subfamily $\mathcal{N} \sub \mathcal{M}$.
\item There is no infinite set $A \sub \N$ such that $A \setminus M$ is finite for all $M\in \mathcal{M}$.
\end{itemize} 
In general, $\omega_1 \leq \mathfrak{p} \leq \mathfrak{c}$. Under CH cardinality less than~$\mathfrak{p}$ just means countable, 
but in other models there are uncountable sets of cardinality less than~$\mathfrak{p}$, 
see e.g.~\cite{bla-J} for more information. 

In Section~\ref{section:sums} we study the stability of property~$(\mu^s)$ under $\ell^p$-sums for $1\leq p \leq \infty$. 
Pe\l cz\'{y}nski showed that the $\ell^1$-sum of $\mathfrak{c}$ copies of $L^1[0,1]$ fails property~(K), see \cite[Example~4.I]{fig-alt2}
(cf.~\cite{fra-ple}). In particular, this implies that property~$(\mu^s)$ is not preserved by arbitrary $\ell^1$-sums. 
We prove that~$(\mu^s)$ is preserved by $\ell^1$-sums of less than~$\mathfrak{p}$ summands (Theorem~\ref{theo:l1sums}),
as well as by arbitrary $\ell^p$-sums whenever $1<p<\infty$ (Theorem~\ref{theo:lpsums}). On the other hand, 
in Example~\ref{exa:Johnson} we point out the existence of a sequence of finite-dimensional spaces
whose $\ell^\infty$-sum fails property~(K), which answers a question left open in \cite[Problem~2.19]{avi-rod}.

In Section~\ref{section:families} we consider a natural weakening of properties~$(\mu^s)$ and~(K)
to deal with certain families of relatively weakly compact sets. This idea is applied to some Banach lattices
and Lebesgue-Bochner spaces. 

\begin{defi}
Let $X$ be a Banach space and let $\mathcal{A}$ be a family of subsets of~$X$. We say that $X$ has
\begin{enumerate}
\item[(i)] property $(\mu^s_\mathcal{A})$ if every $w^*$-null sequence in~$X^*$
admits a subsequence such that all of its subsequences are Ces\`{a}ro convergent to~$0$ uniformly on each element of~$\mathcal{A}$;
\item[(ii)] property (K$_\mathcal{A}$) if every $w^*$-null sequence in~$X^*$
admits a convex block subsequence which converges to~$0$ uniformly on each element of~$\mathcal{A}$.
\end{enumerate}
\end{defi}

For instance, the so-called {\em property~($k$)} of Figiel, Johnson and Pe\l cz\'{y}nski~\cite{fig-alt2}
coincides with property~(K$_{\mathcal{A}}$) when $\mathcal{A}$ is the family 
\begin{equation}\label{eqn:FJP}
	\big\{T(C): \, T:L^1[0,1]\to X \mbox{ operator, } C \in wk(L^1[0,1])\big\},
\end{equation}
see \cite[Lemma~8.1]{dep-dod-suk}. Every weakly sequentially complete Banach lattice with weak unit 
has property~($k$), see \cite[Proposition~4.5(b)]{fig-alt2}. 
This can also be obtained as a consequence of a result of de Pagter, Dodds and Sukochev (see \cite[Theorem~5.3]{dep-dod-suk}) 
stating that every Banach lattice~$X$ with order continuous norm and weak unit has property (K$_{\mathcal{A}}$) when $\mathcal{A}$
is the family of all order bounded subsets of~$X$. We sharpen those results by proving
that, in fact, such Banach lattices have property~$(\mu^s_{\mathcal{A}})$ when $\mathcal{A}$ is the family
described in~\eqref{eqn:FJP} or the family of all $L$-weakly compact sets, respectively (Corollary~\ref{cor:k} and Theorem~\ref{theo:lattice}). 

Finally, we focus on the Lebesgue-Bochner space $L^1(\nu,X)$, where $\nu$ is a finite measure and $X$ is a Banach space.  
It is known that if $X$ contains a subspace isomorphic to~$c_0$, then $L^1([0,1],X)$ contains a {\em complemented} subspace isomorphic to~$c_0$, see~\cite{emm}. 
When applied to the space $\ell^\infty$, this shows that properties $(\mu^s)$ and~(K) do not pass from~$X$ to~$L^1([0,1],X)$ in general (cf. \cite[Remark~6.5]{fig-alt2}).
In fact, in Theorem~\ref{theo:L1X-c0} we prove that $L^1([0,1],X)$ fails property~(K$_{\mathcal{A}}$), 
for the family $\mathcal{A}$ of all $\delta\mathcal{S}$-sets of~$L^1([0,1],X)$, whenever $X$ contains a subspace 
isomorphic to~$c_0$.

\begin{defi}\label{defi:deltaS-set}
A set $K \sub L^1(\nu,X)$ is said to be a $\delta\mathcal{S}$-set if it is uniformly integrable and
for every $\delta>0$ there exists a weakly compact set $W \sub X$ such that
$\nu(f^{-1}(W))\geq 1-\delta$ for every $f\in K$.
\end{defi}

The collection of all $\delta\mathcal{S}$-sets of $L^1(\nu,X)$ will be denoted by $\delta\mathcal{S}(\nu,X)$
or simply $\delta\mathcal{S}$ if no confussion arises.
These sets play an important role when studying weak compactness in Lebesgue-Bochner spaces. 
Any $\delta\mathcal{S}$-set of $L^1(\nu,X)$ is relatively weakly compact, while the converse is not true in general.
For more information on these sets, see \cite{rod17} and the references therein. Concerning positive results, we show that $L^1(\nu,X)$ has property 
$(\mu^s_{\delta\mathcal{S}})$ whenever $X$ is a subspace of a S$^2$WCG space
(Theorem~\ref{theo:L1X-SSRG}). In general, the assumption that $X^*$ has the Banach-Saks property is not enough 
to ensure that $L^1(\nu,X)$ has property~$(\mu^s_{\delta\mathcal{S}})$ (Example~\ref{exa:Schachermayer}).

\section{Notation and preliminaries}\label{section:Preliminaries}

The symbol $|S|$ stands for the cardinality of a set~$S$. All our vector spaces are real. Given a sequence $(f_n)_n$ in a vector space, a {convex block subsequence} of~$(f_n)_n$
is a sequence $(g_k)_k$ of the form
$$
	g_k=\sum_{n\in I_k}a_n f_n
$$
where $(I_k)_k$ is a sequence of finite subsets of~$\N$ with $\max(I_k) < \min(I_{k+1})$ and $(a_n)_n$ is a sequence
of non-negative real numbers such that $\sum_{n\in I_k}a_n=1$ for all~$k \in \N$.
An {\em operator} is a continuous linear map between Banach spaces. 
By a {\em subspace} of a Banach space we mean a closed linear subspace.
Given a Banach space~$X$, its norm is denoted by either $\|\cdot\|_X$ or simply $\|\cdot\|$, and we write $B_X=\{x\in X:\|x\|\leq 1\}$. 
The topological dual of~$X$ is denoted by~$X^*$ and the adjoint of an operator~$T$ is denoted by~$T^*$. The evaluation of $x^*\in X^*$ at $x\in X$ is denoted by either $x^*(x)$
or $\langle x^*,x\rangle$. The weak (resp. weak$^*$) topology on~$X$ (resp.~$X^*$) is denoted by~$w$ (resp.~$w^*$). 

\begin{lem}\label{lem:K}
Let $X$ be a Banach space and let $\mathcal{A}$ be a family of subsets of~$X$. If $X$ has property~$(\mu_\mathcal{A}^s)$, then it also has property~(K$_\mathcal{A}$).
\end{lem}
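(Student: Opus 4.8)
The plan is to feed a $w^*$-null sequence into property~$(\mu^s_\mathcal{A})$ and then manufacture a convex block subsequence out of the resulting Cesàro means. Concretely, given a $w^*$-null sequence $(f_n)_n$ in~$X^*$, property~$(\mu^s_\mathcal{A})$ yields a subsequence $(f_{n_j})_j$ all of whose subsequences are Ces\`aro convergent to~$0$ uniformly on each $A\in\mathcal{A}$. I will only need the subsequence itself (so property~(K$_\mathcal{A}$) in fact uses less than the full strength of~$(\mu^s_\mathcal{A})$), so I write $h_j:=f_{n_j}$ and $s_k:=\frac{1}{k}\sum_{j=1}^k h_j$, so that $\sup_{x\in A}|s_k(x)|\to 0$ as $k\to\infty$ for every $A\in\mathcal{A}$.

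The naive idea of taking $(s_k)_k$ itself as the desired convex block subsequence fails: consecutive Ces\`aro means share the terms $h_1,\dots,h_k$, so the blocks overlap and $(s_k)_k$ is not a convex block subsequence of~$(f_n)_n$. Instead I would form genuine, pairwise disjoint blocks over a geometrically growing window. Setting $p_l:=2^l$ and
\[
	g_l:=\frac{1}{p_l-p_{l-1}}\sum_{j=p_{l-1}+1}^{p_l} h_j \qquad (l\ge 1),
\]
the index sets $J_l:=\{n_j: p_{l-1}<j\le p_l\}$ are finite, increasing and disjoint, since $\max J_l=n_{p_l}<n_{p_l+1}=\min J_{l+1}$, and the coefficients sum to~$1$; hence $(g_l)_l$ is a convex block subsequence of the original sequence~$(f_n)_n$.

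The point that makes the argument work is the telescoping identity: since $p_l-p_{l-1}=p_{l-1}$, one computes $g_l=2s_{p_l}-s_{p_{l-1}}$, whence
\[
	\sup_{x\in A}|g_l(x)|\le 2\sup_{x\in A}|s_{p_l}(x)|+\sup_{x\in A}|s_{p_{l-1}}(x)|
\]
for every $A\in\mathcal{A}$, and the right-hand side tends to~$0$ as $l\to\infty$. This establishes property~(K$_\mathcal{A}$).

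The step that I expect to require the most care is precisely this last one, and its subtlety is that $\mathcal{A}$ may be uncountable: one cannot hope to choose, block by block, a window that makes $g_l$ small on every member of~$\mathcal{A}$ simultaneously, so a term-by-term diagonalization over~$\mathcal{A}$ is unavailable. The construction circumvents this because the windows $p_l=2^l$ are fixed once and for all, \emph{independently} of~$A$, and the bound on $\sup_{x\in A}|g_l(x)|$ is expressed through the two fixed Ces\`aro means $s_{p_l}$ and $s_{p_{l-1}}$, each of which tends to~$0$ on the given~$A$. Thus one single convex block subsequence works simultaneously for the whole family, which is exactly what~(K$_\mathcal{A}$) demands.
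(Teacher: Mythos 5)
Your proof is correct and is essentially the paper's own argument: both pass to the subsequence given by property~$(\mu^s_\mathcal{A})$ and form dyadic block averages, and your identity $g_l=2s_{p_l}-s_{p_{l-1}}$ is exactly the paper's relation $v_k=\frac{1}{2}(v_{k-1}+w_k)$ rearranged. The remark that only Ces\`aro convergence of the chosen subsequence itself (not of all its subsequences) is needed matches the way the paper uses the hypothesis.
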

\begin{proof} Bear in mind that if $(u_n)_n$ is a sequence in a topological vector space which is Ces\`{a}ro convergent to~$0$, then 
it admits a convex block subsequence converging to~$0$. Indeed, define 
$$
	v_k:=\frac{1}{2^{k}}\sum_{n=1}^{2^k}u_n
	\quad \mbox{and} \quad
	w_k:=\frac{1}{2^{k-1}}\sum_{n=2^{k-1}+1}^{2^k}u_n
$$
for every $k\in \Nat$. Then $(w_k)_w$ is a convex block subsequence of~$(u_n)_n$ converging to~$0$, because $(v_k)_k$ converges to~$0$ and 
$v_k=\frac{1}{2}(v_{k-1}+w_k)$ for all $k\geq 2$.
\end{proof}

In particular, {\em property~$(\mu^s)$ implies property~(K)}.
The converse is not true in general, since any reflexive space has property~(K), there are reflexive spaces which
fail the Banach-Saks property (see \cite{bae}) and, moreover, we have:

\begin{pro}\label{pro:Reflexive}
Let $X$ be a Banach space. The following statements are equivalent:
\begin{enumerate}
\item[(i)] $X^*$ has the Banach-Saks property;
\item[(ii)] $X$ is reflexive and has property $(\mu^s)$;
\item[(iii)] $X$ contains no subspace isomorphic to~$\ell^1$ and has property $(\mu^s)$.
\end{enumerate}
\end{pro}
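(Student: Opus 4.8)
The plan is to prove (i)$\Leftrightarrow$(ii), the trivial implication (ii)$\Rightarrow$(iii), and then (iii)$\Rightarrow$(ii); the entire content of the last step is to show that the hypotheses in~(iii) already force $X$ to be reflexive, after which (iii) reduces to~(ii).

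For (i)$\Rightarrow$(ii): if $X^*$ has the Banach-Saks property then $X^*$ is reflexive, hence so is $X$. When $X$ is reflexive, $B_X$ is weakly compact, so $\mu(X^*,X)$ coincides with the norm topology of $X^*$ and $w^*=w$ on $X^*$. Given a $w^*$-null (hence weakly null and bounded) sequence $(x_n^*)$ in $X^*$, I apply Theorem~\ref{theo:ErdosMagidor}: since the Banach-Saks property rules out alternative~(ii) of that theorem for every subsequence, I obtain a subsequence all of whose subsequences are norm-Ces\`{a}ro convergent to a common limit $L$. As the Ces\`{a}ro means of a weakly null sequence are weakly null, $L=0$; norm convergence is Mackey convergence, so property~$(\mu^s)$ follows. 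The converse (ii)$\Rightarrow$(i) runs along the same lines: in a reflexive $X$ every bounded sequence in $X^*$ has a weakly ($=w^*$) convergent subsequence $x_{n_j}^*\to y^*$, and applying~$(\mu^s)$ to the $w^*$-null sequence $x_{n_j}^*-y^*$ yields a norm-Ces\`{a}ro convergent subsequence of $(x_n^*)$; thus $X^*$ is Banach-Saks. Finally, (ii)$\Rightarrow$(iii) is immediate, since reflexive spaces contain no copy of~$\ell^1$.

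The core is (iii)$\Rightarrow$(ii), i.e. that $X$ must be reflexive, which I prove by contraposition: assuming $X$ contains no copy of~$\ell^1$ but is not reflexive, I exhibit a $w^*$-null sequence in $X^*$ witnessing the failure of~$(\mu^s)$. Since $X$ contains no~$\ell^1$, Rosenthal's theorem lets me pass to weakly Cauchy sequences, and non-reflexivity (via Eberlein--\v{S}mulian) means some such sequence is not weakly convergent; equivalently, by James' characterization of non-reflexivity there are $\theta\in(0,1)$, a weakly Cauchy sequence $(x_n)_n\subseteq B_X$, and $(f_m)_m\subseteq B_{X^*}$ with $f_m(x_n)=\theta$ for $m\le n$ and $f_m(x_n)=0$ for $m>n$. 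Because $X$ contains no~$\ell^1$, the dual ball $(B_{X^*},w^*)$ is $w^*$-sequentially compact, so after passing to a subsequence $f_m\to f_\infty$ in the $w^*$-topology; one checks $f_\infty(x_n)=0$ for all $n$, so the sequence $g_m:=f_m-f_\infty$ is bounded, $w^*$-null, and still satisfies $g_m(x_n)=\theta$ for $m\le n$ and $0$ for $m>n$.

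The decisive point is to build, for an arbitrary subsequence of $(g_m)$, a weakly compact set on which its Ces\`{a}ro means stay bounded below. Fix such a subsequence and relabel it $(g_k)_k$, with $g_k(x_n)=\theta$ exactly when $\mu_k\le n$ for an increasing index sequence $(\mu_k)_k$. I then choose $x$-indices $p_1<p_2<\cdots$ so that the counts $Q_j:=|\{k:\mu_k\le p_j\}|$ grow geometrically (say $Q_{j+1}\ge 2Q_j$), and set $y_j:=x_{p_{j+1}}-x_{p_j}$. Since $(x_n)$ is weakly Cauchy, $y_j\to 0$ weakly, so $W:=\{y_j\}_j\cup\{0\}$ is weakly compact. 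A direct computation gives $g_k(y_j)=\theta$ precisely when $p_j<\mu_k\le p_{j+1}$, i.e. when $Q_j<k\le Q_{j+1}$, whence the Ces\`{a}ro mean $D_N:=\frac{1}{N}\sum_{k=1}^{N}g_k$ satisfies $\langle D_{Q_{j+1}},y_j\rangle=\theta\,(1-Q_j/Q_{j+1})\ge \theta/2$. Thus the Ces\`{a}ro means of the chosen subsequence do not converge to~$0$ in $\mu(X^*,X)$; as this happens for every subsequence of $(g_m)$ (the relevant sub-subsequence being the subsequence itself), no subsequence can have all of its subsequences Ces\`{a}ro convergent to~$0$ in the Mackey topology, contradicting~$(\mu^s)$. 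This reflexivity argument---in particular the adaptive construction of the weakly compact detector $W$ together with the geometric growth of the $Q_j$ that keeps the averages bounded below---is the step I expect to be the main obstacle; once reflexivity is secured, (iii) becomes~(ii) and the proof closes.
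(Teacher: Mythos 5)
Your implications (i)$\Rightarrow$(ii), (ii)$\Rightarrow$(i) and (ii)$\Rightarrow$(iii) are correct and essentially the paper's argument: the paper also gets (ii)$\Rightarrow$(i) from the coincidence of $\mu(X^*,X)$ with the norm topology and the $w^*$-sequential compactness of $B_{X^*}$ in the reflexive case, and your use of Theorem~\ref{theo:ErdosMagidor} to upgrade the Banach--Saks property of~$X^*$ to the ``all further subsequences'' form required by $(\mu^s)$ is exactly the observation made in the introduction. The genuine gap is in (iii)$\Rightarrow$(ii), at the step where you assert: ``Because $X$ contains no~$\ell^1$, the dual ball $(B_{X^*},w^*)$ is $w^*$-sequentially compact.'' This is false in general: Hagler and Odell constructed a Banach space not containing $\ell^1$ whose dual ball is not $w^*$-sequentially compact (Illinois J. Math. 22 (1978), 290--294). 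The true statements in this vicinity are that $B_{X^*}$ is $w^*$-sequentially compact when $X^*$ (not $X$) contains no copy of~$\ell^1$ (via Rosenthal's theorem, since weakly Cauchy sequences in $X^*$ are $w^*$-convergent), or when $X$ is Asplund; neither is available under your hypotheses. Without the extraction $f_m\to f_\infty$ ($w^*$) you have no $w^*$-null sequence $(g_m)$ in~$X^*$ to test property $(\mu^s)$ against, so the contrapositive argument does not get off the ground.

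The gap is not cosmetically repairable within your scheme. The natural fix --- running the argument inside the separable subspace $Y:=\overline{\mathrm{span}}\{x_n:n\in\N\}$, which is non-reflexive by James' criterion and whose dual ball \emph{is} $w^*$-sequentially compact --- only shows that $Y$ fails $(\mu^s)$, and $(\mu^s)$ does not pass from a space to its subspaces in general: the paper itself notes that $\ell^\infty$ has $(\mu^s)$ while its subspace $c_0$ fails it (see Remark~\ref{rem:subspaces} and the comments following it; what one would need is $w^*$-extensibility of $Y$ in~$X$, which under your hypotheses is exactly as unavailable as the sequential compactness you assumed --- indeed it would follow from the reflexivity you are trying to prove, making any such route circular). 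The paper sidesteps all of this: it deduces (iii)$\Rightarrow$(ii) from Lemma~\ref{lem:K} ($(\mu^s)$ implies property~(K)) together with the cited theorem that a Banach space with property~(K) and no subspace isomorphic to~$\ell^1$ is reflexive \cite[Theorem~2.1]{avi-rod}. Your James-array computation itself --- the weakly null differences $y_j=x_{p_{j+1}}-x_{p_j}$, the geometric growth $Q_{j+1}\geq 2Q_j$ of the counts, and the resulting lower bound $\langle D_{Q_{j+1}},y_j\rangle\geq\theta/2$ on the weakly compact set $\{y_j:j\in\N\}\cup\{0\}$ --- is correct and in the spirit of that cited result, but as written it establishes (iii)$\Rightarrow$(ii) only when $B_{X^*}$ happens to be $w^*$-sequentially compact (e.g. for separable~$X$), not in general.
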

\begin{proof} (i)$\impli$(ii): Clearly, the Banach-Saks property of~$X^*$ implies that $X$ has property~$(\mu^s)$. On the other hand, 
as we mentioned in the introduction, every space with the Banach-Saks property is reflexive.  
 
The implication (ii)$\impli$(iii) is obvious, while (iii)$\impli$(ii) follows from Lemma~\ref{lem:K} and
the fact that any Banach space with property~(K) and without subspaces isomorphic to~$\ell^1$ is reflexive,
see \cite[Theorem~2.1]{avi-rod}. 

Finally, (ii)$\impli$(i) follows from the fact that, if $X$ is reflexive, then $\mu(X^*,X)$ agrees with the norm topology of~$X^*$
and $B_{X^*}$ is $w^*$-sequentially compact.  
\end{proof}

\begin{pro}\label{pro:CK}
Let $L$ be a compact Hausdorff topological space. The following statements are equivalent:
\begin{enumerate}
\item[(i)] $C(L)$ is Grothendieck (i.e. every $w^*$-convergent sequence in~$C(L)^*$ is weakly convergent);
\item[(ii)] $C(L)$ has property $(\mu^s)$.
\end{enumerate} 
\end{pro}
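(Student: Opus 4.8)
The plan is to prove both implications, leaning on two structural features of $M(L):=C(L)^*$. First, being the dual of a $C(L)$ space, $M(L)$ is an AL-space, hence (Kakutani's representation) order isometric to some $L^1(\lambda)$; consequently it is weakly sequentially complete and its weakly compact subsets are Banach--Saks by Szlenk's theorem, recalled in the introduction. Second, the norm topology of $M(L)$ is finer than $\mu(M(L),C(L))$, because weakly compact subsets of $C(L)$ are bounded, so norm convergence always entails Mackey convergence. I shall also use freely that property~$(\mu^s)$ implies property~(K) (Lemma~\ref{lem:K}) and that the $w^*$-limit of any subsequence or convex block subsequence of a $w^*$-null sequence is again~$0$.

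For (i)$\impli$(ii), let $(\mu_n)_n$ be a $w^*$-null sequence in $M(L)$. Since $C(L)$ is Grothendieck, $(\mu_n)_n$ is weakly null, so $\{\mu_n:n\in\N\}\cup\{0\}$ is weakly compact by the Eberlein--\v{S}mulian theorem and therefore Banach--Saks. By the Erd\H{o}s--Magidor theorem (Theorem~\ref{theo:ErdosMagidor}) alternative~(ii) is then impossible, and $(\mu_n)_n$ admits a subsequence all of whose subsequences are (norm) Ces\`{a}ro convergent to a common limit. That limit must coincide with the weak limit~$0$ of the Ces\`{a}ro means, so every such subsequence is Ces\`{a}ro convergent to~$0$ in norm, a fortiori in $\mu(M(L),C(L))$. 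This is exactly property~$(\mu^s)$.

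For (ii)$\impli$(i) I argue by contraposition, showing that if $C(L)$ is not Grothendieck then it fails property~(K) and hence, by Lemma~\ref{lem:K}, also~$(\mu^s)$. If $C(L)$ is not Grothendieck there is a $w^*$-null sequence $(\mu_n)_n$ in $M(L)$ that is not weakly null; picking $F\in M(L)^*$ and $\epsilon>0$ with $|F(\mu_n)|\geq\epsilon$ along a subsequence, this subsequence has no weakly convergent sub-subsequence (any weak limit would equal the $w^*$-limit~$0$), so by Rosenthal's $\ell^1$-theorem together with the weak sequential completeness of $M(L)$ it has a further subsequence, still denoted $(\mu_n)_n$, equivalent to the unit vector basis of~$\ell^1$. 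It then suffices to prove that such a sequence admits no convex block subsequence which is $\mu(M(L),C(L))$-null, contradicting property~(K).

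To establish this I would mimic the model case $C(L)=c_0$: there, for the $\ell^1$-basis $(e_n)_n\sub\ell^1=c_0^*$, the disjointly supported indicators $h_k=\sum_{n\in I_k}e_n$ form a weakly null (hence weakly compact) set in $c_0$ on which every convex block $\sum_{n\in I_k}a_n e_n$ evaluates to~$1$. In general, since $(\mu_n)_n$ is $\ell^1$-equivalent and $w^*$-null, a disjointification (Rosenthal's lemma) should produce, after passing to a subsequence, functions $g_n\in C(L)$ with $\|g_n\|\leq 1$, $\langle\mu_n,g_n\rangle\geq\delta>0$ and small cross terms $\sum_{m\neq n}|\langle\mu_n,g_m\rangle|$, arranged to have essentially disjoint supports so that they are bounded and pointwise null, hence weakly null in $C(L)$. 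For a convex block subsequence $\nu_k=\sum_{n\in I_k}a_n\mu_n$ the test functions $h_k=\sum_{n\in I_k}g_n$ then give $\langle\nu_k,h_k\rangle\geq\delta/2$, while $\{h_k:k\in\N\}\cup\{0\}$ is weakly compact, so $(\nu_k)_k$ cannot be Mackey-null. The main obstacle is precisely this construction: one must carry out the disjointification so that the cross terms are controlled \emph{uniformly} enough to preserve the lower bound $\langle\nu_k,h_k\rangle\geq\delta/2$ for every convex block simultaneously, and so that the resulting family is genuinely weakly compact in $C(L)$ --- which is the classical phenomenon that a $w^*$-null $\ell^1$-sequence in $C(L)^*$ forces a copy of~$c_0$ inside~$C(L)$.
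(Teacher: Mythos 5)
Your proof of (i)$\Rightarrow$(ii) is correct and is, modulo detail, exactly what the paper compresses into ``follows at once'': $C(L)^*$ is order isometric to an $L^1$-space, hence has the weak Banach--Saks property by Szlenk, and a $w^*$-null sequence in a Grothendieck $C(L)$ is weakly null, so Theorem~\ref{theo:ErdosMagidor} applied to the (relatively weakly compact, hence Banach--Saks) set $\{\mu_n\}$ gives a subsequence all of whose subsequences are norm Ces\`aro convergent; identifying the limit as~$0$ via weak nullity and passing from norm to Mackey convergence are both fine.

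The direction (ii)$\Rightarrow$(i) is where the attempt genuinely falls short, and you have flagged the hole yourself. The paper does not reprove this implication: it invokes Lemma~\ref{lem:K} together with the quoted fact that $C(L)$ is Grothendieck if and only if it has property~(K) \cite[Corollary~2.5]{avi-rod}. You instead try to reconstruct that cited result, and the reconstruction stalls precisely at its essential step. Worse, the intermediate reduction you set up is a dead end as stated: extracting via Rosenthal's $\ell^1$-theorem a $w^*$-null subsequence equivalent to the $\ell^1$-basis and then claiming ``it suffices to show such a sequence has no Mackey-null convex block subsequence'' cannot work from $\ell^1$-equivalence alone. The Rademacher functions in $L^\infty[0,1]=L^1[0,1]^*$ are $w^*$-null and isometrically equivalent to the unit vector basis of~$\ell^1$, yet $L^1[0,1]$ has property~$(\mu^s)$ (Corollary~\ref{cor:L1}), so their convex blocks \emph{are} Mackey-null. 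Thus no argument using only $w^*$-nullity and $\ell^1$-ness of $(\mu_n)$ can produce the weakly null test functions you need.

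What actually drives the classical construction you are gesturing at is measure-theoretic structure specific to $M(L)$, not $\ell^1$-structure: by the Dieudonn\'e--Grothendieck characterization of relative weak compactness in $M(L)$, a $w^*$-null sequence that is not weakly null fails to be uniformly regular, which yields (after passing to a subsequence) pairwise disjoint open sets $U_n$ with $|\mu_n|(U_n)\geq\delta$, and Rosenthal's sieve lemma then controls the cross masses $|\mu_n|\bigl(\bigcup_{m\neq n}U_m\bigr)$ uniformly along a further subsequence. Choosing $g_n\in C(L)$ with $\|g_n\|\leq 1$ supported in~$U_n$, the \emph{disjointness of supports} --- not any a priori norm estimate --- simultaneously gives the uniform cross-term bound for every convex block and makes $h_k=\sum_{n\in I_k}g_n$ bounded and pointwise null, hence weakly null by dominated convergence through the Riesz representation. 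This is the Cembranos--Freniche route showing a non-Grothendieck $C(L)$ contains a complemented copy of~$c_0$ (on which property~(K) fails, by \cite[Proposition~C]{kal-pel}), and it is exactly the content of the result the paper uses as a black box. As written, your (ii)$\Rightarrow$(i) is a program rather than a proof; either carry out the Dieudonn\'e--Grothendieck/Rosenthal argument in full or cite \cite[Corollary~2.5]{avi-rod} as the paper does.
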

\begin{proof} 
$C(L)^*$ is isomorphic (in fact, order isometric) to the $L^1$-space of a non-negative measure, so it has the weak Banach-Saks property. 
The implication (i)$\impli$(ii) follows at once from this.

(ii)$\impli$(i): Apply Lemma~\ref{lem:K} and the fact that $C(L)$ is Grothendieck if (and only if) it has property~(K), see \cite[Corollary~2.5]{avi-rod}. 
\end{proof}

A subspace~$Y$ of a Banach space~$X$ is said to be {\em $w^*$-extensible} in~$X$  
if every $w^*$-null sequence in~$Y^*$ admits a subsequence which can be extended to a $w^*$-null sequence in~$X^*$. 
It is easy to check that: (i)~any complemented subspace is $w^*$-extensible; and (ii)~if $B_{X^*}$ is $w^*$-sequentially compact, then any subspace is $w^*$-extensible in~$X$
(see \cite[Theorem~2.1]{wan-alt}). The following is straightforward:

\begin{rem}\label{rem:subspaces}
Let $X$ be a Banach space and let $Y\sub X$ be a subspace which is $w^*$-extensible in~$X$. 
Let $\mathcal{A}$ be a family of subsets of~$Y$.
If $X$ has property~$(\mu^s_{\mathcal{A}})$,
then $Y$ has property~$(\mu^s_{\mathcal{A}})$ as well.
\end{rem}

In general, the statement of Remark~\ref{rem:subspaces} is not true for arbitrary subspaces. For instance,
$\ell^\infty$ has property $(\mu^s)$ (by Proposition~\ref{pro:CK} and the fact that $\ell^\infty$ is Grothendieck) but $c_0$ does not
(since it fails~(K), see e.g. \cite[Proposition~C]{kal-pel}).

\section{Strong generation and property~$(\mu^s)$}\label{section:StronglyGenerated}

Our first result in this section provides a sufficient condition for property~$(\mu^s)$.

\begin{theo}\label{theo:Generation}
Let $X$ be a Banach space which is strongly generated by an operator with Banach-Saks adjoint. Then every subspace of~$X$ 
has property $(\mu^s)$. 
\end{theo}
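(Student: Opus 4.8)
The plan is to reduce the theorem to two more manageable pieces and then combine them. By Remark~\ref{rem:subspaces}, it suffices to prove that $X$ itself has property~$(\mu^s)$, provided I can arrange $w^*$-extensibility of subspaces. But since $X$ is strongly generated by an operator with Banach-Saks adjoint, I expect $B_{X^*}$ to be $w^*$-sequentially compact (strong generation by a weakly compact-type operator should force this), so every subspace is $w^*$-extensible in~$X$ and Remark~\ref{rem:subspaces} applies. Thus the crux is to establish $(\mu^s)$ for~$X$.

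Let $T\colon Y\to X$ be the strongly generating operator whose adjoint $T^*\colon X^*\to Y^*$ is Banach-Saks. The defining data of strong generation is that $wk(X)$ is strongly generated by $\{nT(B_Y):n\in\N\}$; that is, for each weakly compact $W\sub X$ and each $\varepsilon>0$ there are $n\in\N$ with $W\sub nT(B_Y)+\varepsilon B_X$. Now take a $w^*$-null sequence $(x^*_k)_k$ in~$X^*$. Since $T^*$ is Banach-Saks and $(x^*_k)_k$ is bounded, $T^*(B_{X^*})$ is a Banach-Saks set, so $(T^*x^*_k)_k$ admits a Ces\`aro convergent subsequence; by the Erd\H os--Magidor theorem (Theorem~\ref{theo:ErdosMagidor}) I can pass to a subsequence $(x^*_{k_j})_j$ such that all of its subsequences have $(T^*x^*_{k_j})_j$ Ces\`aro convergent in $Y^*$, and since $(x^*_k)_k$ is $w^*$-null the common limit must be~$0$. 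This subsequence is the candidate witnessing $(\mu^s)$.

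The heart of the argument is then to show that this norm-Ces\`aro convergence of $(T^*x^*_{k_j})_j$ to~$0$ in~$Y^*$ upgrades to Mackey-Ces\`aro convergence of $(x^*_{k_j})_j$ to~$0$ in~$X^*$, and that the property is inherited by every further subsequence. Fix a weakly compact $W\sub X$ and $\varepsilon>0$; I want the Ces\`aro means $\sigma_m:=\frac1m\sum_{j=1}^m x^*_{k_j}$ to satisfy $\sup_{x\in W}|\langle\sigma_m,x\rangle|\to 0$. Using strong generation, write $W\sub nT(B_Y)+\varepsilon B_X$ for a suitable~$n$. On the $\varepsilon B_X$ part I control $|\langle\sigma_m,x\rangle|\le\varepsilon\sup_m\|\sigma_m\|\le\varepsilon\sup_k\|x^*_k\|$ by boundedness. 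On the $nT(B_Y)$ part I have $\sup_{y\in nB_Y}|\langle\sigma_m,Ty\rangle|=n\sup_{y\in B_Y}|\langle T^*\sigma_m,y\rangle|=n\|T^*\sigma_m\|_{Y^*}$, and $T^*\sigma_m=\frac1m\sum_{j=1}^m T^*x^*_{k_j}$ is exactly the Ces\`aro mean of $(T^*x^*_{k_j})_j$, which tends to~$0$ in norm. Combining the two estimates gives $\limsup_m\sup_{x\in W}|\langle\sigma_m,x\rangle|\le\varepsilon\sup_k\|x^*_k\|$, and letting $\varepsilon\to0$ yields the claim.

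I expect the main obstacle to be bookkeeping around the Erd\H os--Magidor selection so that the Mackey-Ces\`aro convergence holds for \emph{every} subsequence simultaneously, not just the chosen one. The clean way is to apply Theorem~\ref{theo:ErdosMagidor} to $(T^*x^*_k)_k$ in~$Y^*$ to obtain one subsequence all of whose subsequences are norm-Ces\`aro convergent; the decomposition estimate above then applies verbatim to any subsequence, since the bound $\limsup_m\sup_{x\in W}|\langle\sigma_m,x\rangle|\le\varepsilon\sup_k\|x^*_k\|$ only used norm-Ces\`aro convergence of the corresponding $(T^*x^*_{k_j})_j$ together with the fixed strong-generation data for~$W$. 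A minor point to verify is that the common Ces\`aro limit is~$0$: it suffices that $(T^*x^*_k)_k$ is $w^*$-null in~$Y^*$, which follows from $(x^*_k)_k$ being $w^*$-null in~$X^*$ and the continuity of $T$, because Ces\`aro limits of $w^*$-null sequences are $w^*$-null.
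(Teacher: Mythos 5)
Your proof is correct and takes essentially the same route as the paper's: reduce to $X$ itself via $w^*$-sequential compactness of $B_{X^*}$ (the paper makes your ``I expect'' precise: the Banach-Saks adjoint is weakly compact, so $X$ is SWCG, hence weakly compactly generated, hence $B_{X^*}$ is $w^*$-sequentially compact), then apply Erd\H{o}s--Magidor to $(T^*x^*_k)_k$ and transfer norm-Ces\`{a}ro convergence to Mackey-Ces\`{a}ro convergence through the estimate $W \sub nT(B_Y)+\varepsilon B_X$. The paper merely packages your two steps as Lemma~\ref{lem:OneSet} and Lemma~\ref{lem:TransferMU-StrongGeneration}(ii) for later reuse; your inlined decomposition argument is the same.
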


The proof of Theorem~\ref{theo:Generation} requires two lemmas which will be used again later.

\begin{lem}\label{lem:TransferMU-StrongGeneration}
Let $X$ be a Banach space. Let $\mathcal{H}$ and $\mathcal{G}$ be two families of subsets of~$X$ such that 
$\mathcal{H}$ is strongly generated by~$\mathcal{G}$. 
\begin{enumerate}
\item[(i)] If $(x_n^*)_n$ is a bounded sequence in~$X^*$ converging to~$0$
uniformly on each element of~$\cG$, then it also converges to~$0$
uniformly on each element of~$\cH$.  
\item[(ii)] If $X$ has property~$(\mu^s_{\cG})$, then it also has property~$(\mu^s_{\cH})$.
\end{enumerate}
\end{lem}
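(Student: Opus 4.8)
The plan is to prove (i) by a direct $\varepsilon/2$ estimate exploiting the strong generation, and then to deduce (ii) simply by applying (i) to the sequences of arithmetic means.

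For part (i), set $M:=\sup_n\|x_n^*\|$, which is finite by hypothesis (if $M=0$ the claim is trivial, so I would assume $M>0$). Fix $H\in\cH$ and $\eta>0$. Using that $\cH$ is strongly generated by~$\cG$, I would choose $G\in\cG$ with $H\sub G+\varepsilon B_X$, where $\varepsilon:=\eta/(2M)$. The key inequality is then the following: every $x\in H$ decomposes as $x=g+e$ with $g\in G$ and $\|e\|\leq\varepsilon$, so for each~$n$,
$$
	|x_n^*(x)|\leq |x_n^*(g)|+\|x_n^*\|\,\|e\|\leq \sup_{y\in G}|x_n^*(y)|+M\varepsilon,
$$
and taking the supremum over $x\in H$ gives $\sup_{x\in H}|x_n^*(x)|\leq \sup_{y\in G}|x_n^*(y)|+\eta/2$. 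Since $(x_n^*)_n$ converges to~$0$ uniformly on~$G$, the first term is below $\eta/2$ for all large~$n$, whence $\sup_{x\in H}|x_n^*(x)|<\eta$ eventually. As $\eta>0$ and $H\in\cH$ were arbitrary, this proves uniform convergence to~$0$ on every element of~$\cH$.

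For part (ii), let $(x_n^*)_n$ be a $w^*$-null sequence in~$X^*$; being $w^*$-convergent it is bounded, say by the constant~$M$ above. By property~$(\mu^s_{\cG})$ I would extract a subsequence $(x_{n_j}^*)_j$ all of whose subsequences are Ces\`{a}ro convergent to~$0$ uniformly on each element of~$\cG$. The point is that this same subsequence witnesses $(\mu^s_{\cH})$: given any further subsequence $(x_{m_i}^*)_i$, its sequence of arithmetic means $u_k:=\frac1k\sum_{i=1}^k x_{m_i}^*$ is again bounded by~$M$ (an average of vectors of norm at most~$M$) and, by the choice of $(x_{n_j}^*)_j$, converges to~$0$ uniformly on each element of~$\cG$. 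Applying part~(i) to the bounded sequence $(u_k)_k$ shows that $(u_k)_k$ converges to~$0$ uniformly on each element of~$\cH$; that is, $(x_{m_i}^*)_i$ is Ces\`{a}ro convergent to~$0$ uniformly on~$\cH$. Hence all subsequences of $(x_{n_j}^*)_j$ have this property, which is exactly property~$(\mu^s_{\cH})$.

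The argument is essentially routine, so the only point requiring care is keeping the quantifiers straight in~(ii): I must apply part~(i) not to the original sequence but to the arithmetic means of an \emph{arbitrary} subsequence, and the crucial observation that makes this legitimate is that these means inherit the uniform bound~$M$, so that part~(i)'s boundedness hypothesis is met uniformly across all subsequences at once.
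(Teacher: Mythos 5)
Your proof is correct and takes essentially the same route as the paper's: part (i) via the decomposition $x=g+e$ with $g\in G$, $\|e\|\leq\varepsilon$ (the paper simply keeps the cruder bound $(1+c)\epsilon$ instead of pre-scaling $\varepsilon$ by $\eta/(2M)$), and part (ii) by applying (i) to the sequences of Ces\`{a}ro means of arbitrary further subsequences. Your explicit remark that these means inherit the uniform bound $M$ is exactly the point the paper leaves implicit in its one-line deduction of (ii).
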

\begin{proof} (ii) is an immediate consequence of~(i) applied to the corresponding sequences of Ces\`{a}ro means.
For the proof of~(i), let $c>0$ be a constant such that $\|x_n^*\|_{X^*}\leq c$ for all $n\in \Nat$. Fix $H\in \cH$ and take any
$\epsilon>0$. Pick $G\in \cG$ such that $H \sub G+\epsilon B_X$ and choose $n_0\in \Nat$ such that $\sup_{x\in G}|x_n^*(x)|\leq \epsilon$
for all $n\geq n_0$. Then $\sup_{x\in H}|x_n^*(x)|\leq (1+c)\epsilon$ for all $n\geq n_0$.
\end{proof}

\begin{lem}\label{lem:OneSet}
Let $X$ and $Y$ be Banach spaces and let $T:Y \to X$ be an operator such that $T^*$ is Banach-Saks.
Then $X$ has property $(\mu^s_{\{T(B_Y)\}})$.
\end{lem}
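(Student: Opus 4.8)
The plan is to transport the entire problem into the dual $Y^*$ via the adjoint $T^*$, and then let the Erd\H{o}s-Magidor dichotomy together with the Banach-Saks hypothesis do the work. The starting observation I would record is the elementary identity
$$
	\sup_{x\in T(B_Y)}|u(x)|=\sup_{y\in B_Y}|\langle T^*u,y\rangle|=\|T^*u\|_{Y^*}
$$
valid for every $u\in X^*$. Consequently, a sequence of functionals tends to~$0$ uniformly on $T(B_Y)$ exactly when their $T^*$-images tend to~$0$ in the norm of~$Y^*$; and since $T^*$ is linear it carries the Ces\`{a}ro means of $(x_n^*)_n$ to the corresponding Ces\`{a}ro means of $(T^*x_n^*)_n$. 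So it suffices, given a $w^*$-null sequence $(x_n^*)_n$ in~$X^*$, to produce a subsequence $(x_{n_j}^*)_j$ all of whose further subsequences have $T^*$-images that are norm-Ces\`{a}ro convergent to~$0$ in~$Y^*$.

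Next I would exploit that $(x_n^*)_n$, being $w^*$-null, is bounded, say $\|x_n^*\|\leq c$; then $(T^*x_n^*)_n$ lies in the set $c\,T^*(B_{X^*})$, which is Banach-Saks because scalar multiples of Banach-Saks sets are Banach-Saks. Applying Theorem~\ref{theo:ErdosMagidor} to the bounded sequence $(T^*x_n^*)_n$ yields a subsequence $(T^*x_{n_j}^*)_j$ satisfying alternative~(i) or~(ii). Alternative~(ii) is impossible, since the Banach-Saks property of $c\,T^*(B_{X^*})$ provides a norm-Ces\`{a}ro convergent sub-subsequence of any subsequence. Thus we land in case~(i): all subsequences of $(T^*x_{n_j}^*)_j$ are norm-Ces\`{a}ro convergent to a single common limit $\ell\in Y^*$.

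Finally I would identify that limit. Because $\langle T^*x_{n_j}^*,y\rangle=\langle x_{n_j}^*,Ty\rangle\to0$ for each $y\in Y$, the sequence $(T^*x_{n_j}^*)_j$ is $w^*$-null in~$Y^*$, so its Ces\`{a}ro means are $w^*$-null; comparing with their norm limit~$\ell$ and using uniqueness of $w^*$-limits forces $\ell=0$. Translating back through the identity above, every subsequence of $(x_{n_j}^*)_j$ then has Ces\`{a}ro means converging to~$0$ uniformly on $T(B_Y)$, which is precisely property~$(\mu^s_{\{T(B_Y)\}})$.

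The argument is in large part bookkeeping, and the only genuinely nontrivial step is the middle one: the Erd\H{o}s-Magidor dichotomy must be combined with the Banach-Saks assumption to upgrade the mere \emph{existence} of a Ces\`{a}ro convergent subsequence into a subsequence \emph{all} of whose subsequences converge to one and the same limit. Once that uniqueness of limit is secured, the $w^*$-nullity of the original sequence pins the limit down to~$0$ for free.
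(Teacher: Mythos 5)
Your proof is correct and takes essentially the same route as the paper's: reduce uniform convergence on $T(B_Y)$ to norm Ces\`{a}ro convergence of the $T^*$-images via $\sup_{x\in T(B_Y)}|u(x)|=\|T^*u\|_{Y^*}$, combine the Erd\H{o}s-Magidor dichotomy (Theorem~\ref{theo:ErdosMagidor}) with the Banach-Saks hypothesis on $T^*$ to extract a subsequence all of whose subsequences are Ces\`{a}ro convergent to a common limit, and identify that limit as~$0$ from the $w^*$-$w^*$-continuity of~$T^*$. The paper's proof is merely a compressed version of this, leaving the Erd\H{o}s-Magidor step implicit (it is stated for exactly this purpose in the introduction), so your write-up just makes the same argument explicit.
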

\begin{proof}
Let $(x_n^*)_n$ be a $w^*$-null sequence in~$X^*$. Since $T^*:X^*\to Y^*$
is Banach-Saks and $w^*$-$w^*$-continuous, there is a subsequence of~$(x_n^*)$, not relabeled,
such that for every further subsequence $(x_{n_k}^*)_k$ we have that $(T^*(x^*_{n_k}))_k$ is Ces\`{a}ro convergent to~$0$ in norm, that is,
$$
	\lim_{N\to \infty}\sup_{y\in B_Y}\Big|\Big\langle \frac{1}{N}\sum_{k=1}^N x_{n_k}^*,T(y) \Big\rangle\Big|=
	\lim_{N\to \infty}\Big\|\frac{1}{N}\sum_{k=1}^N T^*(x_{n_k}^*)\Big\|_{Y^*}=0.
$$
This shows that $X$ has property $(\mu^s_{\{T(B_Y)\}})$.
\end{proof}

\begin{proof}[Proof of Theorem~\ref{theo:Generation}]
Since any Banach-Saks operator is weakly compact, the space $X$ is SWCG. In particular, $X$ is weakly compactly
generated and so $B_{X^*}$ is $w^*$-sequentially compact (see e.g. \cite[p.~228, Theorem~4]{die-J}). 
Therefore, every subspace is $w^*$-extensible in~$X$ and so it suffices to prove that $X$ has property $(\mu^s)$ 
(see Remark~\ref{rem:subspaces} and the paragraph preceding it).
To this end, let $Y$ be a Banach space and let $T:Y \to X$ be an operator such that $T^*$ is Banach-Saks and $wk(X)$ is strongly generated by~$T(B_Y)$.
By Lemmas~\ref{lem:OneSet} and~\ref{lem:TransferMU-StrongGeneration}(ii), $X$ has property~$(\mu^s)$.
\end{proof}

An operator between Banach spaces $T:Y\to X$ is said to be {\em super weakly compact} 
if the ultrapower $T^\mathcal{U}:Y^\mathcal{U}\to X^\mathcal{U}$ is weakly compact for every free ultrafilter~$\mathcal{U}$ on~$\N$. This
is equivalent to being {\em uniformly convexifying} in the sense of Beauzamy~\cite{bea1}, see e.g. \cite[Theorem~5.1]{hei2}. 
A Banach space $X$ is said to be {\em strongly super weakly compactly generated (S$^2$WCG)} 
if it is strongly generated by a super weakly compact operator (see~\cite{raj9}). In general:
\begin{center}
strongly generated by a super-reflexive space $\Longrightarrow$ S$^2$WCG $\Longrightarrow$ strongly generated by an operator with Banach-Saks adjoint.
\end{center}
The first implication is clear. The second one holds because an operator is super weakly compact if and only if its adjoint is super weakly compact 
(see \cite[Proposition~II.4]{bea1}) 
and any super weakly compact operator is Banach-Saks (see \cite[Th\'{e}or\`{e}me~3]{bea2}).
An example of a S$^2$WCG space which is not strongly generated by a super-reflexive space can be found in \cite[Example~3.10]{raj9}. 
The converse of the second implication above is neither true in general, even for reflexive spaces. Indeed, there are spaces with the Banach-Saks property 
which are not super-reflexive (see \cite{nis-wat}), while every S$^2$WCG reflexive space
is super-reflexive (cf. \cite[Theorem~1.9]{raj9}).

\begin{cor}\label{cor:Generation}
Every subspace of a S$^2$WCG Banach space has property $(\mu^s)$. 
\end{cor}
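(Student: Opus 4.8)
The plan is to deduce Corollary~\ref{cor:Generation} from Theorem~\ref{theo:Generation} by showing that the hypothesis of being S$^2$WCG is a special case of the hypothesis of Theorem~\ref{theo:Generation}, namely being strongly generated by an operator with Banach-Saks adjoint. Since Theorem~\ref{theo:Generation} already concludes that \emph{every subspace} has property~$(\mu^s)$, no separate argument for subspaces is needed; the entire content of the corollary is the implication ``S$^2$WCG $\impli$ strongly generated by an operator with Banach-Saks adjoint,'' which is precisely the second implication displayed between the statement of Theorem~\ref{theo:Generation} and the corollary.

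Concretely, first I would unwind the definition: if $X$ is S$^2$WCG, then by definition it is strongly generated by some super weakly compact operator $T:Y\to X$. Next I would invoke the cited characterization that an operator is super weakly compact if and only if its adjoint is super weakly compact (Beauzamy~\cite[Proposition~II.4]{bea1}), so that $T^*$ is super weakly compact. Then I would apply the fact that every super weakly compact operator is Banach-Saks (Beauzamy~\cite[Th\'{e}or\`{e}me~3]{bea2}) to conclude that $T^*$ is Banach-Saks. Thus $X$ is strongly generated by an operator $T$ whose adjoint $T^*$ is Banach-Saks, and Theorem~\ref{theo:Generation} applies verbatim.

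The proof is therefore essentially a one-line citation chain, and I would expect no genuine obstacle: the two implications in the displayed diagram have already been justified in the surrounding text, so the corollary is an immediate specialization. The only point requiring a modicum of care is to make sure the operator $T$ witnessing S$^2$WCG is the \emph{same} operator that strongly generates $X$ in the sense of Definition~\ref{defi:SGoperator}, i.e. that $\{nT(B_Y):n\in\N\}$ strongly generates $wk(X)$ — but this is exactly what the definition of S$^2$WCG supplies, so the hypotheses of Theorem~\ref{theo:Generation} are met with this very $T$.

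\begin{proof}
As explained in the discussion preceding the statement, if $X$ is S$^2$WCG then it is strongly generated by a super weakly compact operator $T:Y\to X$. Since an operator is super weakly compact if and only if its adjoint is super weakly compact (see \cite[Proposition~II.4]{bea1}), the adjoint $T^*$ is super weakly compact, and every super weakly compact operator is Banach-Saks (see \cite[Th\'{e}or\`{e}me~3]{bea2}), so $T^*$ is Banach-Saks. Hence $X$ is strongly generated by an operator with Banach-Saks adjoint, and Theorem~\ref{theo:Generation} yields that every subspace of~$X$ has property~$(\mu^s)$.
\end{proof}
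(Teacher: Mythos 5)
Your proof is correct and matches the paper exactly: the paper derives Corollary~\ref{cor:Generation} from Theorem~\ref{theo:Generation} via the same citation chain, namely that the adjoint of a super weakly compact operator is super weakly compact (\cite[Proposition~II.4]{bea1}) and that super weakly compact operators are Banach-Saks (\cite[Th\'{e}or\`{e}me~3]{bea2}), as stated in the text justifying the displayed implications preceding the corollary. No gaps.
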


\begin{cor}\label{cor:L1}
Let $\nu$ be a finite measure. Then every subspace of $L^1(\nu)$ has property~$(\mu^s)$.
\end{cor}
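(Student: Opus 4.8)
Corollary: For a finite measure $\nu$, every subspace of $L^1(\nu)$ has property $(\mu^s)$.

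The plan is to derive this as a direct consequence of Theorem~\ref{theo:Generation}, which guarantees property~$(\mu^s)$ for every subspace of a Banach space strongly generated by an operator with Banach-Saks adjoint. So the entire task reduces to exhibiting such an operator whose range strongly generates $L^1(\nu)$. The natural candidate is a formal inclusion-type operator coming from the $L^2$-$L^1$ comparison. Concretely, since $\nu$ is finite, the identity inclusion $j\colon L^2(\nu) \to L^1(\nu)$ is a well-defined bounded operator (by Cauchy-Schwarz, $\|f\|_1 \le \nu(\Omega)^{1/2}\|f\|_2$). I would take $Y = L^2(\nu)$ and $T = j$.

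First I would verify that $T(B_{L^2(\nu)})$ strongly generates $wk(L^1(\nu))$, i.e. that $L^1(\nu)$ is strongly generated by the super-reflexive space $L^2(\nu)$. This is essentially the classical fact referenced in the excerpt that $L^1(\nu)$ (finite $\nu$) is strongly generated by a super-reflexive space. The verification uses that weakly compact subsets of $L^1(\nu)$ are exactly the uniformly integrable, bounded sets (Dunford-Pettis); given such a set $W$ and $\varepsilon>0$, uniform integrability lets me truncate each $f \in W$ at a level $M$ so that the tail $\|f - f\mathbf{1}_{\{|f|\le M\}}\|_1 < \varepsilon$, and the truncated functions lie in a ball of $L^\infty(\nu) \subseteq L^2(\nu)$ of radius $M$, hence in $n\,T(B_{L^2(\nu)})$ for suitable $n$ depending on $M$ and $\nu(\Omega)$. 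Thus $W \subseteq n\,T(B_{L^2(\nu)}) + \varepsilon B_{L^1(\nu)}$, which is precisely strong generation by $T(B_Y)$ in the sense of Definition~\ref{defi:SGoperator}.

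Next I would check that the adjoint $T^*$ is Banach-Saks. Here $T^* \colon L^1(\nu)^* = L^\infty(\nu) \to L^2(\nu)^* = L^2(\nu)$ is again (up to the identification) an inclusion-type map into the super-reflexive space $L^2(\nu)$. Since $L^2(\nu)$ is super-reflexive it has the Banach-Saks property, so $T^*(B_{L^\infty(\nu)})$—being a bounded subset of $L^2(\nu)$—is a Banach-Saks set; equivalently every operator into a super-reflexive space is Banach-Saks. Hence $T^*$ is Banach-Saks, and the hypotheses of Theorem~\ref{theo:Generation} are met. Alternatively, and even more cleanly, I would simply invoke the implication displayed in the excerpt that "strongly generated by a super-reflexive space $\Rightarrow$ S$^2$WCG $\Rightarrow$ strongly generated by an operator with Banach-Saks adjoint," so that Corollary~\ref{cor:Generation} applies once $L^1(\nu)$ is known to be strongly generated by $L^2(\nu)$.

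The only genuine content, and the step I expect to be the main (though modest) obstacle, is the strong-generation claim: cleanly producing, for each weakly compact $W$ and each $\varepsilon$, an explicit dilate of $T(B_{L^2(\nu)})$ that approximates $W$ to within $\varepsilon B_{L^1(\nu)}$. Everything rests on the equivalence (via Dunford-Pettis) between weak compactness and uniform integrability in $L^1$ of a finite measure, and on the truncation argument above; the super-reflexivity of $L^2(\nu)$ then supplies the Banach-Saks adjoint for free. Once these two facts are in place, the corollary follows immediately from Theorem~\ref{theo:Generation}.
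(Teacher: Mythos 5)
Your proof is correct and follows exactly the route the paper intends: exhibit the inclusion $L^2(\nu)\to L^1(\nu)$ as strong generation of $L^1(\nu)$ by a super-reflexive space (via Dunford--Pettis and truncation) and invoke Theorem~\ref{theo:Generation} (equivalently Corollary~\ref{cor:Generation} through the chain super-reflexive generation $\Rightarrow$ S$^2$WCG $\Rightarrow$ generation by an operator with Banach-Saks adjoint). Your write-up merely makes explicit the uniform-integrability truncation argument that the paper treats as classical, so there is nothing to add.
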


\begin{rem}\label{rem:MercourakisStamati}
The previous results are formulated in terms of ``subspaces'' because the corresponding classes of Banach spaces
are not hereditary. Indeed, Mercourakis and Stamati (see \cite[Theorem~3.9(ii)]{mer-sta-2}) showed that there exist subspaces of 
$L^1[0,1]$ which are not SWCG. 
\end{rem}

The Erdös-Magidor Theorem~\ref{theo:ErdosMagidor} is also valid for Fr\'{e}chet spaces and, under certain set theoretic assumptions, 
for other classes of locally convex spaces, see~\cite{pou}. Along this way, we have the following:

\begin{theo}\label{theo:p}
Let $X$ be a Banach space which is strongly generated by a family $\mathcal{G}\sub wk(X)$
with $|\mathcal{G}|<\mathfrak{p}$ (e.g. a SWCG space). Then $X$ has property $(\mu^s)$ if (and only if) every $w^*$-null sequence in~$X^*$
admits a subsequence which is Ces\`{a}ro convergent to~$0$ with respect to $\mu(X^*,X)$.
\end{theo}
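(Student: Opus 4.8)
The statement has one nontrivial direction: assuming that every $w^*$-null sequence in $X^*$ admits a subsequence that is merely Ces\`{a}ro convergent to $0$ in $\mu(X^*,X)$, I must upgrade this to property $(\mu^s)$, i.e. produce a subsequence \emph{all of whose further subsequences} are Ces\`{a}ro convergent to $0$. The hypothesis that $X$ is strongly generated by a family $\mathcal{G}\subseteq wk(X)$ with $|\mathcal{G}|<\mathfrak{p}$ is what makes this upgrade possible. By Lemma~\ref{lem:TransferMU-StrongGeneration}(i), convergence uniformly on each $G\in\mathcal{G}$ already forces convergence uniformly on each weakly compact set, i.e. in $\mu(X^*,X)$; so the whole problem can be reduced to controlling uniform Ces\`{a}ro convergence on the $<\mathfrak{p}$ sets of $\mathcal{G}$ simultaneously. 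Let me sketch the two stages.

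**Stage one: a single set via Erd\H{o}s--Magidor.** Fix one $G\in\mathcal{G}$. Since $G$ is relatively weakly compact, for each $g\in G$ the sequence $(x_n^*(g))_n$ is bounded, and more importantly the means $(\frac{1}{k}\sum_{j=1}^k x_{n_j}^*)_k$ evaluated uniformly over $G$ behave like a bounded sequence in the space $C(\overline{G}^{\,w})$ of continuous functions on the weak closure of $G$ (each $x_n^*$ restricts to a weakly continuous, hence $w$-continuous, function on the weakly compact set $\overline{G}^{\,w}$). The Erd\H{o}s--Magidor dichotomy (Theorem~\ref{theo:ErdosMagidor}), which the excerpt notes is valid in Fr\'{e}chet spaces and hence applicable to uniform convergence on a single weakly compact set, guarantees that any bounded sequence admits a subsequence that is either ``hereditarily Ces\`{a}ro convergent'' (all sub-subsequences converge, to the same limit) or ``hereditarily Ces\`{a}ro divergent'' (no subsequence converges). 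The role of the standing hypothesis is to rule out the bad alternative: we are given that the original $w^*$-null sequence has \emph{some} subsequence that is Ces\`{a}ro convergent in $\mu(X^*,X)$, hence uniformly on $G$, which witnesses that the divergent alternative cannot occur. So for each fixed $G$ we can extract a subsequence along which all further subsequences are Ces\`{a}ro convergent to $0$ uniformly on $G$, and the limit must be $0$ since the sequence is $w^*$-null.

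**Stage two: diagonalizing across $<\mathfrak{p}$ sets.** The combinatorial heart is to perform Stage one simultaneously for every $G\in\mathcal{G}$. This is exactly where $\mathfrak{p}$ enters. I would index $\mathcal{G}=\{G_\alpha:\alpha<\kappa\}$ with $\kappa<\mathfrak{p}$ and build a $\subseteq^*$-decreasing (mod finite) family of infinite subsets $M_\alpha\subseteq\N$, where $M_\alpha$ is obtained by applying Stage one to $G_\alpha$ inside the sequence already indexed by the earlier $M_\beta$'s. At limit stages I need an infinite \emph{pseudo-intersection} $M$ of the previously constructed $\{M_\beta:\beta<\alpha\}$; the defining property of $\mathfrak{p}$ is precisely that any family of fewer than $\mathfrak{p}$ infinite sets with the finite intersection property (more precisely, the strong FIP) admits such a pseudo-intersection. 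After running through all $\alpha<\kappa$ and taking a final pseudo-intersection $M_\infty$ of the whole tower, the sequence indexed by $M_\infty$ is, for every $\alpha$, \emph{almost} a subsequence of the $G_\alpha$-good set $M_\alpha$; since a finite modification does not affect Ces\`{a}ro limits, every further subsequence of $(x_n^*)_{n\in M_\infty}$ is Ces\`{a}ro convergent to $0$ uniformly on each $G_\alpha$. By Lemma~\ref{lem:TransferMU-StrongGeneration}(i) this gives uniform Ces\`{a}ro convergence on all of $wk(X)$, i.e. in the Mackey topology, yielding property $(\mu^s)$.

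**The main obstacle.** The delicate point is the transfinite pseudo-intersection argument: I must verify that the tower $\{M_\alpha\}$ genuinely has the strong finite intersection property at every stage so that $\mathfrak{p}$ applies, and that ``hereditary Ces\`{a}ro convergence on $G_\alpha$'' is preserved when passing to an almost-subsequence and when intersecting. Preservation under \emph{finite} modification is clear for Ces\`{a}ro means, but I want the property to survive passage to \emph{any} pseudo-intersection, which requires that the Stage-one conclusion be genuinely hereditary (``all subsequences converge''), not merely ``the sequence itself converges'' --- this is exactly why the Erd\H{o}s--Magidor formulation, rather than plain Ces\`{a}ro convergence, is needed, and it is the reason the theorem can only \emph{upgrade} the weaker single-subsequence hypothesis rather than prove $(\mu^s)$ outright. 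Getting the bookkeeping of the tower and its limit pseudo-intersections correct, while checking that $w^*$-nullity pins every limit to $0$, is the part I expect to demand the most care.
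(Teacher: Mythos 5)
Your proposal is correct and follows essentially the same route as the paper: the Erd\H{o}s--Magidor dichotomy applied to the restrictions $x^*\mapsto x^*|_G$ in the Banach space $C(G)$ for each $G\in\mathcal{G}$, followed by a pseudo-intersection diagonalization over the $|\mathcal{G}|<\mathfrak{p}$ many sets (which the paper packages as Lemmas~\ref{lem:diagonalization} and~\ref{lem:TVSbelow-p}), and finally the transfer Lemma~\ref{lem:TransferMU-StrongGeneration} to pass from uniform convergence on~$\mathcal{G}$ to the Mackey topology. One small precision: to exclude the divergent alternative you must apply the standing hypothesis to the extracted subsequence itself (which is still $w^*$-null), since a Ces\`{a}ro convergent subsequence of the \emph{original} sequence need not be a subsequence of the one produced by the dichotomy---your transfinite construction implicitly uses this hereditary form at every stage.
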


To deal with the proof of Theorem~\ref{theo:p} we need two lemmas. The first one is a standard diagonalization argument.

\begin{lem}\label{lem:diagonalization}
Let $\{\mathcal{S}_\alpha\}_{\alpha<\gamma}$ be a
collection of families of infinite subsets of~$\N$, where $\gamma$ is an ordinal with $\gamma<\mathfrak{p}$. Suppose that, for each $\alpha<\gamma$, we have: 
\begin{enumerate}
\item[(i)] if $B \sub \Nat$ is infinite and $B \setminus A$ is finite for some $A\in \cS_\alpha$, then $B\in \cS_\alpha$;
\item[(ii)] every infinite subset of~$\N$ contains an element of~$\cS_\alpha$.
\end{enumerate}
Then every infinite subset of~$\N$ contains an element of~$\bigcap_{\alpha<\gamma} \cS_\alpha$.
\end{lem}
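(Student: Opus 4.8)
The plan is to fix an arbitrary infinite set $B_0\subseteq \N$ and to produce, by transfinite recursion on $\alpha<\gamma$, a $\subseteq^*$-decreasing tower $\{A_\alpha\}_{\alpha<\gamma}$ of infinite subsets of~$B_0$ that descends through the families~$\cS_\alpha$; here I write $B\subseteq^* A$ to mean that $B\setminus A$ is finite. The only tool I need from the theory of~$\mathfrak{p}$ is the following reformulation of its definition: any family of infinite subsets of~$\N$ of cardinality $<\mathfrak{p}$ with the strong finite intersection property (every finite subfamily has infinite intersection) admits an infinite \emph{pseudo-intersection}, i.e. an infinite $P\subseteq \N$ with $P\subseteq^* M$ for every member~$M$. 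Note that a $\subseteq^*$-decreasing family automatically has the strong finite intersection property, and that $|\alpha|<\mathfrak{p}$ for every $\alpha<\gamma$ since $\mathfrak{p}$ is a cardinal.

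The recursion maintains the invariant that $A_\alpha\in\cS_\alpha$, that $A_\alpha\subseteq B_0$, and that $A_\alpha\subseteq^* A_\beta$ whenever $\beta<\alpha$. To start, hypothesis~(ii) applied to~$B_0$ gives $A_0\in\cS_0$ with $A_0\subseteq B_0$. At a successor step, $A_\alpha$ is infinite, so (ii) applied to~$A_\alpha$ yields $A_{\alpha+1}\in\cS_{\alpha+1}$ with $A_{\alpha+1}\subseteq A_\alpha$, preserving the invariant. At a limit ordinal $\lambda<\gamma$, the family $\{A_\beta\}_{\beta<\lambda}$ is $\subseteq^*$-decreasing of size $|\lambda|<\mathfrak{p}$, so it has an infinite pseudo-intersection; intersecting it with~$B_0$ (which removes only finitely many points, since the pseudo-intersection is $\subseteq^* A_0\subseteq B_0$) I obtain an infinite $P_\lambda\subseteq B_0$ with $P_\lambda\subseteq^* A_\beta$ for all $\beta<\lambda$, and then (ii) applied to~$P_\lambda$ produces $A_\lambda\in\cS_\lambda$ with $A_\lambda\subseteq P_\lambda$, again preserving the invariant.

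Once the whole tower $\{A_\alpha\}_{\alpha<\gamma}$ is built, it is $\subseteq^*$-decreasing of size $|\gamma|<\mathfrak{p}$, so it admits an infinite pseudo-intersection, which I may again take inside~$B_0$; call it~$C$. For each $\alpha<\gamma$ we have $C\subseteq^* A_\alpha$ with $A_\alpha\in\cS_\alpha$, so hypothesis~(i) forces $C\in\cS_\alpha$. Hence $C$ is an infinite subset of~$B_0$ lying in $\bigcap_{\alpha<\gamma}\cS_\alpha$, which is exactly the conclusion.

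The only real work is at the limit stages, where the full strength of~$\mathfrak{p}$ enters: I must keep producing pseudo-intersections while guaranteeing that the cardinality of the family being intersected stays below~$\mathfrak{p}$ and that the tower remains $\subseteq^*$-decreasing and trapped inside~$B_0$. Hypothesis~(ii) is what lets me re-enter the next family~$\cS_\alpha$ at every step, whereas hypothesis~(i) is used only once, at the very end, to promote the final pseudo-intersection from an almost-subset of each~$A_\alpha$ to an actual member of each~$\cS_\alpha$; the base and successor cases are immediate.
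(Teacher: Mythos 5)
Your proof is correct and is essentially the paper's own argument: a transfinite recursion producing an almost-decreasing tower $\{A_\alpha\}_{\alpha<\gamma}$ with $A_\alpha\in\cS_\alpha$, using the pseudo-intersection reformulation of~$\mathfrak{p}$ at the stages of uncountable cofinality and once more at the end, with hypothesis~(i) promoting the final pseudo-intersection into every~$\cS_\alpha$. The only cosmetic differences are that you treat successor stages separately (where no appeal to~$\mathfrak{p}$ is needed) and invoke~(i) a single time at the end, whereas the paper runs all induction steps uniformly through the pseudo-intersection argument and also applies~(i) at each intermediate stage, which is in fact redundant there.
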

\begin{proof}
Fix an infinite set $B \sub \N$. We will first construct by transfinite induction a collection $\{B_\alpha:\alpha<\gamma\}$ of subsets of~$B$ with the following properties:
\begin{enumerate}
\item[($P_\alpha$)] $B_\alpha \in \cS_\alpha$ for all $\alpha<\gamma$;
\item[($Q_{\alpha,\beta}$)] $B_\beta \setminus B_\alpha$ is finite whenever $\alpha\leq\beta <\gamma$.
\end{enumerate}
For $\alpha=0$ we just use~(ii) to select any subset $B_0$ of~$B$ belonging to~$\cS_0$.
Suppose now that $1\leq \gamma'< \gamma$ and that we have already constructed 
a collection $\{B_\alpha:\alpha<\gamma'\}$ of subsets of~$B$
such that ($P_\alpha$) and ($Q_{\alpha,\beta}$) hold whenever $\alpha\leq\beta<\gamma'$.
In particular, for any finite set $I \sub \gamma'$ the intersection $\bigcap_{\alpha\in I}B_\alpha$ is infinite. 
Since $\gamma' < \mathfrak{p}$, there is an infinite set~$B_{\gamma'} \sub B$ 
such that $B_{\gamma'} \setminus B_\alpha$ is finite for all $\alpha<\gamma'$.
Property~(i) implies that $B_{\gamma'}\in \bigcap_{\alpha<\gamma'}\cS_\alpha$. Now property~(ii)
implies that, by passing to a further subset of~$B_{\gamma'}$ if necessary, we can assume
that ($P_{\gamma'}$) holds. Clearly, ($Q_{\alpha,\beta}$) also holds for every $\alpha\leq \beta \leq \gamma'$.
This finishes the inductive construction.

Since $\gamma<\mathfrak{p}$ and for any finite set $I \sub \gamma$ the intersection $\bigcap_{\alpha\in I}B_\alpha$ is infinite, 
there is an infinite set $C \sub B$ such that $C \setminus B_\alpha$ is finite for every $\alpha<\gamma$. From~(i) it follows that
$C\in \bigcap_{\alpha<\gamma}\cS_\alpha$.
\end{proof}

The second lemma will also be used in Section~\ref{section:sums}. 

\begin{lem}\label{lem:TVSbelow-p}
Let $\{E_i\}_{i\in I}$ be a family of topological vector spaces with $|I|<\mathfrak{p}$
and let $E:=\prod_{i\in I}E_i$ be equipped with the product topology. For each $i\in I$, we denote
by $\rho_i:E \to E_i$ the $i$th-coordinate projection.
Let $(u_n)_n$ be a sequence in~$E$ satisfying the following condition: 
\begin{itemize}
\item[($\star$)] for every infinite set $A\sub \N$ and every $i\in I$
there is an infinite set $B \sub A$ such that 
the subsequence $(\rho_i(u_n))_{n\in C}$ is Ces\`{a}ro convergent to~$0$ in~$E_i$
for every infinite set $C \sub B$. 
\end{itemize}
Then there is a subsequence of~$(u_n)_n$ such that
 all of its subsequences are Ces\`{a}ro convergent to~$0$ in~$E$.
\end{lem}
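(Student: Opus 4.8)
The plan is to reduce the statement to the diagonalization Lemma~\ref{lem:diagonalization}, applied with the index set~$I$ playing the role of the ordinal~$\gamma$. The first observation is that Cesàro convergence to~$0$ in the product space~$E$ is nothing but coordinatewise Cesàro convergence to~$0$: each $\rho_i$ is linear and continuous, so for any infinite $C=\{c_1<c_2<\cdots\}\sub\N$ one has $\rho_i\big(\frac{1}{N}\sum_{j=1}^N u_{c_j}\big)=\frac{1}{N}\sum_{j=1}^N\rho_i(u_{c_j})$, and convergence in the product topology is convergence in every coordinate. Hence $(u_n)_{n\in C}$ is Cesàro convergent to~$0$ in~$E$ if and only if $(\rho_i(u_n))_{n\in C}$ is Cesàro convergent to~$0$ in~$E_i$ for every $i\in I$.

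The second ingredient is an elementary stability fact: \emph{Cesàro convergence to~$0$ in a topological vector space is preserved under finite modifications of the sequence.} I would prove this by noting that if $(a_k)_k$ is Cesàro null and $(b_m)_m$ arises from $(a_k)_k$ by removing and/or inserting finitely many terms, then for all large~$M$ one has
$$
	\frac{1}{M}\sum_{m=1}^M b_m=\frac{M+p}{M}\cdot\frac{1}{M+p}\sum_{k=1}^{M+p}a_k+\frac{1}{M}S,
$$
where $p\in\mathbb{Z}$ accounts for the shift in indices and $S$ is a fixed (finite) partial sum. Since scalar multiplication is jointly continuous, $\frac{M+p}{M}\to 1$ together with $\frac{1}{M+p}\sum_{k=1}^{M+p}a_k\to 0$ forces the first term to~$0$, and $\frac{1}{M}S\to 0$ trivially.

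Now, for each $i\in I$ I set
$$
	\cS_i:=\big\{B\sub\N \text{ infinite}:(\rho_i(u_n))_{n\in C}\text{ is Ces\`{a}ro convergent to }0\text{ in }E_i\text{ for every infinite }C\sub B\big\}.
$$
Condition~($\star$) is precisely the statement that every infinite subset of~$\N$ contains an element of~$\cS_i$, which is hypothesis~(ii) of Lemma~\ref{lem:diagonalization}. Hypothesis~(i) is exactly where the finite-modification fact is needed: if $A\in\cS_i$ and $B\sub\N$ is infinite with $B\setminus A$ finite, then for any infinite $C\sub B$ the set $C\cap A$ is a cofinite subset of~$C$ lying in~$A$, so $(\rho_i(u_n))_{n\in C\cap A}$ is Cesàro null; since $C$ differs from $C\cap A$ only by the finite set $C\setminus A$, the second paragraph gives that $(\rho_i(u_n))_{n\in C}$ is Cesàro null too, whence $B\in\cS_i$.

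Finally, since $|I|<\mathfrak{p}$ I would enumerate~$I$ as $\{i_\alpha:\alpha<\gamma\}$ for the cardinal $\gamma=|I|<\mathfrak{p}$ and apply Lemma~\ref{lem:diagonalization} to the collection $\{\cS_{i_\alpha}\}_{\alpha<\gamma}$; taking $B=\N$ in its conclusion produces an infinite set $D\in\bigcap_{i\in I}\cS_i$. By definition of the $\cS_i$, every infinite $C\sub D$ satisfies that $(\rho_i(u_n))_{n\in C}$ is Cesàro null for all~$i$, which by the first paragraph means that $(u_n)_{n\in C}$ is Cesàro convergent to~$0$ in~$E$. Thus $(u_n)_{n\in D}$ is the required subsequence. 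The only genuinely delicate point is the finite-modification stability of Cesàro nullity, which is precisely what makes property~(i) of Lemma~\ref{lem:diagonalization} hold; the remainder is bookkeeping and an invocation of the diagonalization lemma.
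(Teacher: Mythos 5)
Your proposal is correct and takes essentially the same route as the paper's proof: you define the identical families $\cS_i$ and verify conditions (i) and~(ii) of Lemma~\ref{lem:diagonalization}, with (ii) immediate from~($\star$) and (i) resting on the stability of Ces\`{a}ro nullity under finite modifications, which the paper invokes in a single line (dropping the first $k_0$ terms) and you make explicit with the shift formula. Your added details --- the coordinatewise characterization of Ces\`{a}ro convergence in the product topology and the joint-continuity argument for the scalar factor $\frac{M+p}{M}$ --- are exactly the points the paper leaves implicit, so the two arguments coincide in substance.
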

\begin{proof}
We will apply Lemma~\ref{lem:diagonalization}. For each $i\in I$, let $\cS_i$
be the family of all infinite sets $A \sub \Nat$ such that for every infinite set $C \sub A$ the
corresponding subsequence $(\rho_i(u_{n}))_{n\in C}$ is Ces\`{a}ro convergent to~$0$ in~$E_i$. 
It suffices to check that conditions (i) and~(ii) of Lemma~\ref{lem:diagonalization} hold 
for this choice. 

Indeed, (ii) follows immediately from ($\star$). On the other hand, fix $i\in I$, $A\in \cS_i$
and an infinite set $B \sub \Nat$ such that $B \setminus A$ is finite. To check that $B\in \cS_i$, take any 
strictly increasing sequence $(n_k)_k$ in~$B$. There is $k_0\in \Nat$ such that $n_k \in A$ for all $k>k_0$, hence 
$(\rho_i(u_{n_k}))_{k>k_0}$ is Ces\`{a}ro convergent to~$0$ in~$E_i$
and so is $(\rho_i(u_{n_k}))_{k}$. 
This shows that $B\in \cS_i$. Thus, condition~(i) of Lemma~\ref{lem:diagonalization} is also satisfied.
\end{proof}

\begin{proof}[Proof of Theorem~\ref{theo:p}]
To prove that $X$ has property~$(\mu^s)$ it suffices
to check that it has property~$(\mu^s_{\cG})$ (Lemma~\ref{lem:TransferMU-StrongGeneration}(ii)).
For each $G\in \cG$, let $R_G:X^*\to C(G)$ be the operator given by $R_G(x^*):=x^*|_G$ (the restriction of~$x^*$ to~$G$).

Let $(x_n^*)_n$ be a $w^*$-null sequence in~$X^*$. Fix an infinite set $A\sub \N$ and $G\in \cG$. 
Since $(R_G(x_n^*))_{n\in A}$ is bounded,
we can apply the Erdös-Magidor Theorem~\ref{theo:ErdosMagidor} to find an infinite set $B \sub A$ such that
either all subsequences of $(R_G(x_{n}^*))_{n\in B}$ are Ces\`{a}ro convergent (to the same limit), or no subsequence of $(R_G(x_{n}^*))_{n\in B}$ is Ces\`{a}ro convergent.
The assumption on~$X$ excludes the second possibility and ensures that all subsequences of $(R_G(x_{n}^*))_{n\in B}$ are Ces\`{a}ro
convergent to~$0$. 

We can now apply Lemma~\ref{lem:TVSbelow-p} to the family of Banach spaces 
$\{C(G)\}_{G\in \cG}$ and the sequence $(u_n)_n$ in~$\prod_{G\in \cG}C(G)$ defined by $u_n:=(R_G(x_n^*))_{G\in \cG}$. So, 
there is a subsequence of~$(x_n^*)_n$ such that all of its subsequences are Ces\`{a}ro convergent to~$0$
uniformly on each $G\in \cG$. This proves that $X$ has property~$(\mu^s_{\cG})$. 
\end{proof}

\section{$\ell^p$-sums}\label{section:sums}

The $\ell^p$-sum ($1\leq p \leq \infty$) of a family of Banach spaces $\{X_i\}_{i\in I}$ is denoted by 
$$
	\Big(\bigoplus_{i\in I}X_i\Big)_{\ell^p}.
$$ 
When $p\neq \infty$ we identify the dual of $(\bigoplus_{i\in I}X_i)_{\ell^p}$ with $(\bigoplus_{i\in I}X^*_i)_{\ell^q}$, 
where $q$ is the conjugate exponent of~$p$, i.e. $1/p+1/q=1$, and
for each $j\in I$ we denote by 
$$
	\pi_j: \Big(\bigoplus_{i\in I}X_i\Big)_{\ell^p}\to X_j \quad\mbox{and}\quad \rho_j:\Big(\bigoplus_{i\in I}X^*_i\Big)_{\ell^q} \to X^*_j
$$ 
the $j$th-coordinate projections.

\begin{lem}\label{lem:l1-convergence-criterion}
Let $\{X_i\}_{i\in I}$ be a family of Banach spaces and $X:=(\bigoplus_{i\in I}X_i)_{\ell^1}$.
Let $(x_n^*)_n$ be a bounded sequence in~$X^*$ such that for every $i\in I$ the sequence $(\rho_i(x_n^*))_n$ is $\mu(X_i^*,X_i)$-null.
Then $(x_n^*)_n$ is $\mu(X^*,X)$-null.
\end{lem}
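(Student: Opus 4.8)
The plan is to reduce Mackey nullity on $X$ to coordinatewise Mackey nullity by exploiting the structure of weakly compact subsets of an $\ell^1$-sum. Under the identification $X^*=(\bigoplus_{i\in I}X_i^*)_{\ell^\infty}$, the duality pairing reads $\langle x^*,w\rangle=\sum_{i\in I}\langle \rho_i(x^*),\pi_i(w)\rangle$, an absolutely convergent series since $\sum_{i\in I}\|\rho_i(x^*)\|\,\|\pi_i(w)\|\le \|x^*\|\,\|w\|$. Fix $c>0$ with $\|x_n^*\|\le c$ for all $n$. To prove that $(x_n^*)_n$ is $\mu(X^*,X)$-null I would fix an arbitrary weakly compact set $W\sub X$ and $\epsilon>0$, and show that $\limsup_n \sup_{w\in W}|\langle x_n^*,w\rangle|\le c\,\epsilon$; since the left-hand side does not depend on $\epsilon$ and $\epsilon>0$ is arbitrary, this gives uniform convergence to $0$ on $W$.

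The two properties of $W$ I would use are classical features of relative weak compactness in $\ell^1$-sums: (a)~for each $i\in I$ the projection $\pi_i(W)$ is relatively weakly compact in $X_i$ (immediate, since $\pi_i$ is a norm-one projection, hence $w$-$w$ continuous); and (b)~the norms of the elements of $W$ are uniformly concentrated on finitely many coordinates, i.e. there is a finite set $F\sub I$ with $\sup_{w\in W}\sum_{i\notin F}\|\pi_i(w)\|\le \epsilon$. Granting these, I would split $\langle x_n^*,w\rangle=\sum_{i\in F}\langle \rho_i(x_n^*),\pi_i(w)\rangle+\sum_{i\notin F}\langle \rho_i(x_n^*),\pi_i(w)\rangle$. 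The tail is dominated, uniformly in $n$ and in $w\in W$, by $\sum_{i\notin F}\|\rho_i(x_n^*)\|\,\|\pi_i(w)\|\le c\sum_{i\notin F}\|\pi_i(w)\|\le c\,\epsilon$. For the head, $F$ is finite and, for each $i\in F$, the set $K_i:=\overline{\pi_i(W)}^{\,w}$ is weakly compact in $X_i$ by~(a); since $(\rho_i(x_n^*))_n$ is $\mu(X_i^*,X_i)$-null we get $\sup_{w\in W}|\langle \rho_i(x_n^*),\pi_i(w)\rangle|\le \sup_{x\in K_i}|\langle \rho_i(x_n^*),x\rangle|\to 0$ as $n\to\infty$. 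Summing the finitely many head terms and adding the tail bound yields $\limsup_n \sup_{w\in W}|\langle x_n^*,w\rangle|\le c\,\epsilon$, as required.

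The main obstacle is property~(b), the uniform tail summability of a weakly compact subset of an $\ell^1$-sum. This does not follow softly: the seminorms $w\mapsto \sum_{i\notin F}\|\pi_i(w)\|$ are norm-continuous and decrease to $0$ pointwise as $F\uparrow I$, but they are \emph{not} weakly continuous, so Dini's theorem does not apply on the weakly compact set $W$. The standard route is a gliding-hump (disjointification) argument: were (b) to fail, one would extract (after reducing to countably many coordinates) elements $w_k\in W$ carrying mass bounded below on pairwise disjoint finite blocks of coordinates, using that each individual element of an $\ell^1$-sum has small tails and, via Eberlein--\v{S}mulian, that $W$ is weakly sequentially compact so that the non-escaping part can be controlled; this produces a subsequence equivalent to the unit vector basis of $\ell^1$, contradicting the weak compactness of~$W$. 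Property~(b) is precisely the well-known characterization of relative weak compactness in $\ell^1$-sums, so I would either cite it or establish it by this disjointification argument.
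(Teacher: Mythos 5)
Your proof is correct and takes essentially the same route as the paper: your properties (a) and (b) together are precisely the fact, which the paper cites from Ka\v{c}ena--Kalenda--Spurn\'{y}, that $wk(X)$ is strongly generated by the weakly compact sets supported on finitely many coordinates, and your head/tail split with the $c\epsilon$ tail bound is exactly the paper's transfer Lemma~\ref{lem:TransferMU-StrongGeneration}(i) written out by hand. The only informal point is the gliding-hump sketch for (b), but since this is indeed the well-known characterization of relative weak compactness in $\ell^1$-sums and you offer to cite it --- as the paper effectively does --- the argument is complete.
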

\begin{proof}
It is known that $wk(X)$ is strongly generated by the family $\mathcal{G}$ consisting of all weakly compact subsets of~$X$ of the form
$$
	\bigcap_{i\in J}\pi_i^{-1}(W_i) \cap \bigcap_{i\in I\setminus J}\pi_i^{-1}(\{0\}),
$$
where $J \sub I$ is finite and $W_i\in wk(X_i)$ for every $i\in J$ (see e.g. \cite[Lemma~7.2(ii)]{kac-alt}).
Clearly, $(x_n^*)_n$ converges to~$0$ uniformly on each element of~$\mathcal{G}$.
From Lemma~\ref{lem:TransferMU-StrongGeneration}(i) it follows that $(x_n^*)_n$ converges to~$0$
with respect to~$\mu(X^*,X)$.
\end{proof}

\begin{theo}\label{theo:l1sums}
Let $\{X_i\}_{i\in I}$ be a family of Banach spaces having property $(\mu^s)$. If $|I|<\mathfrak{p}$,
then $(\bigoplus_{i\in I}X_i)_{\ell^1}$ has property $(\mu^s)$.
\end{theo}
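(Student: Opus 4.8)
The plan is to reduce the $\ell^1$-sum statement to the machinery already developed, specifically Lemma~\ref{lem:TVSbelow-p} (which handles products of fewer than $\mathfrak{p}$ topological vector spaces) and Lemma~\ref{lem:l1-convergence-criterion} (which converts coordinatewise Mackey-nullness into Mackey-nullness on the whole $\ell^1$-sum). Write $X:=(\bigoplus_{i\in I}X_i)_{\ell^1}$, so that $X^*=(\bigoplus_{i\in I}X_i^*)_{\ell^\infty}$ with coordinate projections $\rho_i:X^*\to X_i^*$. Let $(x_n^*)_n$ be an arbitrary $w^*$-null sequence in~$X^*$; the goal is to extract a subsequence all of whose subsequences are $\mu(X^*,X)$-Ces\`{a}ro convergent to~$0$.

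First I would observe that for each fixed $i\in I$ the sequence $(\rho_i(x_n^*))_n$ is $w^*$-null in~$X_i^*$. This is immediate because $\rho_i$ is $w^*$-$w^*$-continuous: indeed $\rho_i=\pi_i^*$ (up to the canonical identification), where $\pi_i:X_i\to X$ is the isometric inclusion into the $i$th coordinate, so $w^*$-convergence in~$X^*$ pushes forward to $w^*$-convergence in~$X_i^*$. Since $X_i$ has property~$(\mu^s)$, for every infinite $A\sub\N$ and every $i\in I$ there is an infinite $B\sub A$ such that every subsequence $(\rho_i(x_n^*))_{n\in C}$ with $C\sub B$ infinite is $\mu(X_i^*,X_i)$-Ces\`{a}ro convergent to~$0$; to see this, apply the definition of $(\mu^s)$ to the subsequence $(\rho_i(x_n^*))_{n\in A}$, whose extracted sub-subsequence has all of \emph{its} subsequences Ces\`{a}ro convergent to~$0$. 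This is precisely condition~($\star$) of Lemma~\ref{lem:TVSbelow-p} applied to the family $\{E_i\}_{i\in I}=\{(X_i^*,\mu(X_i^*,X_i))\}_{i\in I}$ and the sequence $u_n:=(\rho_i(x_n^*))_{i\in I}$ in the product $\prod_{i\in I}X_i^*$ with the product of the Mackey topologies.

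Since $|I|<\mathfrak{p}$, Lemma~\ref{lem:TVSbelow-p} now yields a single subsequence $(x_{n_k}^*)_k$ such that all of its subsequences are Ces\`{a}ro convergent to~$0$ in the product topology, i.e. coordinatewise in each $\mu(X_i^*,X_i)$. Fix any further subsequence and let $(z_N^*)_N$ denote its sequence of Ces\`{a}ro means $z_N^*=\frac1N\sum_{k=1}^N x_{n_{j_k}}^*$. The coordinatewise conclusion says exactly that $(\rho_i(z_N^*))_N$ is $\mu(X_i^*,X_i)$-null for every $i\in I$, and $(z_N^*)_N$ is norm-bounded because $(x_n^*)_n$ is. Lemma~\ref{lem:l1-convergence-criterion} then upgrades this to $\mu(X^*,X)$-nullness of $(z_N^*)_N$, i.e. the chosen subsequence of $(x_{n_k}^*)_k$ is $\mu(X^*,X)$-Ces\`{a}ro convergent to~$0$. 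As this holds for every subsequence, $X$ has property~$(\mu^s)$.

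The step I expect to be the genuine crux is verifying condition~($\star$), and in particular making sure the Ces\`{a}ro means stay uniformly bounded so that Lemma~\ref{lem:l1-convergence-criterion} applies at the end; both are routine once the $w^*$-$w^*$-continuity of $\rho_i$ is recorded. The only subtlety worth stating carefully is that property~$(\mu^s)$ is phrased for $w^*$-null sequences, so one should note that every subsequence $(\rho_i(x_n^*))_{n\in A}$ is itself $w^*$-null and hence eligible; after that, the two cited lemmas do all the work, with the cardinality hypothesis $|I|<\mathfrak{p}$ entering solely through Lemma~\ref{lem:TVSbelow-p}.
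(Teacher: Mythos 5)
Your proposal is correct and follows essentially the same route as the paper's proof: both verify condition~($\star$) coordinatewise via property~$(\mu^s)$ of each~$X_i$, apply Lemma~\ref{lem:TVSbelow-p} (where the hypothesis $|I|<\mathfrak{p}$ enters), and conclude via Lemma~\ref{lem:l1-convergence-criterion} applied to the Ces\`{a}ro means of an arbitrary further subsequence. Your explicit checks of the $w^*$-$w^*$-continuity of~$\rho_i$ and of the boundedness of the means are details the paper leaves implicit, and they are carried out correctly.
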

\begin{proof} Write $X:=(\bigoplus_{i\in I}X_i)_{\ell^1}$.
Let $(x_n^*)_n$ be a $w^*$-null sequence in $X^*$. Then $(\rho_i(x_n^*))_n$
is $w^*$-null in~$X_i^*$ for all $i\in I$. Since each $X_i$ has property~$(\mu^s)$, we can apply Lemma~\ref{lem:TVSbelow-p}
to the family of locally convex spaces $\{(X^*_i,\mu(X_i^*,X_i))\}_{i\in I}$
and the sequence $(u_n)_n$ in~$\prod_{i\in I}X^*_i$ defined by $u_n:=(\rho_i(x_n^*))_{i\in I}$. Therefore, there is 
a subsequence of~$(x_n^*)_n$, not relabeled, such that 
for every further subsequence $(x^*_{n_k})_k$ and
every $i\in I$, the sequence $(\rho_i(x^*_{n_k}))_k$ is Ces\`{a}ro convergent to~$0$ with respect to~$\mu(X_i^*,X_i)$.
Now, apply Lemma~\ref{lem:l1-convergence-criterion} to the sequence of arithmetic means of any such subsequence~$(x^*_{n_k})_k$
to conclude that $(x^*_{n_k})_k$ is Ces\`{a}ro convergent to~$0$ with respect to~$\mu(X^*,X)$.
\end{proof}

The following lemma isolates an argument used in the proof of Theorem~2.15 in~\cite{avi-rod}, which says
that property~(K) is preserved by arbitrary~$\ell^p$-sums for $1<p<\infty$. We will use it to prove the same
statement for property~$(\mu^s)$ (Theorem~\ref{theo:lpsums} below).

\begin{lem}\label{lem:lpsums}
Let $\{X_i\}_{i\in I}$ be a family of Banach spaces, let $1<p<\infty$ and write $X:=(\bigoplus_{i\in I}X_i)_{\ell^p}$. Let 
$(y_j^*)_j$ be a sequence in~$X^*$ such that:
\begin{enumerate}
\item[(i)] for every $i\in I$ the sequence $(\rho_i(y^*_{j}))_j$ is $\mu(X_i^*,X_i)$-null;
\item[(ii)] there is a norm convergent sequence $(\tilde{\xi}_j)_j$ in~$\ell^q(I)$ such that 
$$
	\tilde{\xi}_j\geq \xi_j :=(\|\rho_i(y_j^*)\|_{X_i^*})_{i\in I}
	\quad\mbox{pointwise in }\ell^q(I)
$$
for all $j\in \N$. 
\end{enumerate}
Then $(y_j^*)_j$ is $\mu(X^*,X)$-null.
\end{lem}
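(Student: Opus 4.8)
The plan is to verify directly that $(y_j^*)_j$ converges to $0$ in the Mackey topology $\mu(X^*,X)$, i.e. that $\sup_{x\in W}|\langle y_j^*,x\rangle|\to 0$ as $j\to\infty$ for every weakly compact set $W\sub X$. Fix such a $W$ and put $M:=\sup_{x\in W}\|x\|_X<\infty$ (recall that a weakly compact set is bounded). Using the duality formula $\langle y_j^*,x\rangle=\sum_{i\in I}\langle \rho_i(y_j^*),\pi_i(x)\rangle$, the idea is to split this series, for a suitable finite set $J\sub I$, into a finite ``head'' $\sum_{i\in J}$ and a ``tail'' $\sum_{i\in I\setminus J}$, and to estimate the two pieces by completely different means.

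For the tail I would use H\"older's inequality in the $\ell^q$--$\ell^p$ duality together with hypothesis~(ii). For any $x\in W$,
$$
	\Big|\sum_{i\in I\setminus J}\langle \rho_i(y_j^*),\pi_i(x)\rangle\Big|
	\leq \Big(\sum_{i\in I\setminus J}\|\rho_i(y_j^*)\|_{X_i^*}^q\Big)^{1/q}\,\|x\|_X
	\leq M\Big(\sum_{i\in I\setminus J}\tilde{\xi}_j(i)^q\Big)^{1/q}.
$$
Since $(\tilde{\xi}_j)_j$ is norm convergent in~$\ell^q(I)$, the set $\{\tilde{\xi}_j:j\in\N\}$ is relatively norm compact and hence has equi-small tails: given $\epsilon>0$ there is a finite set $J\sub I$ with $\big(\sum_{i\in I\setminus J}\tilde{\xi}_j(i)^q\big)^{1/q}<\epsilon$ for all $j$. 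Thus the tail is at most $M\epsilon$, uniformly in $j$ and in $x\in W$.

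For the (now fixed, finite) head the key observation is that each coordinate projection $\pi_i:X\to X_i$ is an operator of norm $\leq 1$, hence $w$-$w$-continuous, so $W_i:=\pi_i(W)$ is a weakly compact subset of~$X_i$. By hypothesis~(i), $(\rho_i(y_j^*))_j$ is $\mu(X_i^*,X_i)$-null, so $\sup_{w\in W_i}|\langle \rho_i(y_j^*),w\rangle|\to 0$ as $j\to\infty$ for each $i$. As $J$ is finite,
$$
	\sup_{x\in W}\Big|\sum_{i\in J}\langle \rho_i(y_j^*),\pi_i(x)\rangle\Big|
	\leq \sum_{i\in J}\sup_{w\in W_i}|\langle \rho_i(y_j^*),w\rangle|\longrightarrow 0.
$$
Combining the two estimates yields $\limsup_j \sup_{x\in W}|\langle y_j^*,x\rangle|\leq M\epsilon$, and letting $\epsilon\to 0$ finishes the proof.

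The main obstacle is the uniform control of the tail: the $\mu(X_i^*,X_i)$-nullity in each coordinate only handles finitely many coordinates at a time, so a global summability hypothesis is indispensable. It is precisely hypothesis~(ii) --- the domination by a norm convergent sequence in $\ell^q(I)$ --- that supplies the equi-small tails needed to truncate $I$ to a finite set uniformly in~$j$; without it the splitting could not be closed.
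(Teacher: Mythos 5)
Your proof is correct and takes essentially the same route as the paper's: split the duality pairing $\langle y_j^*,x\rangle=\sum_{i\in I}\langle\rho_i(y_j^*),\pi_i(x)\rangle$ into a finite head, handled by hypothesis~(i) applied to the weakly compact sets $\pi_i(W)$, and a tail, handled by H\"older's inequality together with the small tails of the norm convergent dominating sequence $(\tilde{\xi}_j)_j$ in $\ell^q(I)$. The only (harmless) cosmetic difference is that you get tails small uniformly in \emph{all} $j$ via relative compactness of $\{\tilde{\xi}_j: j\in\N\}$, whereas the paper only arranges smallness for $j>j_0$, which suffices equally.
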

\begin{proof} Fix any weakly compact set $L \sub B_X$ and $\epsilon>0$. 
Since $(\tilde{\xi}_j)_j$ is norm convergent in~$\ell^q(I)$, there exist a finite set $I_0 \sub I$ and $j_0\in \N$ such that
\begin{equation}\label{eqn:cola-w}
	\sup_{j> j_0} \Big(\sum_{i\in I\setminus I_0} \psi_i(\tilde{\xi}_j)^q\Big)^{\frac{1}{q}} \leq \epsilon,
\end{equation}
where $\psi_i\in \ell^q(I)^*$ denotes the $i$th-coordinate functional. 
Bearing in mind that 
$$
	\|\rho_i(y_j^*)\|_{X_i^*} =\psi_i(\xi_j) \leq \psi_i(\tilde{\xi}_j)
	\quad\mbox{for every }i\in I \mbox{ and }j\in \Nat,
$$
from~\eqref{eqn:cola-w} we get
$$	
	\sup_{j> j_0} \Big(\sum_{i\in I\setminus I_0} \|\rho_i(y_j^*)\|_{X_i^*}^q\Big)^{\frac{1}{q}} \leq \epsilon.
$$
The previous inequality and H\"{o}lder's one imply that
\begin{equation}\label{eqn:cola-ww}
	\sup_{j> j_0} \, \sum_{i\in I \setminus I_0} |b_i|\cdot \|\rho_i(y_j^*)\|_{X_i^*} \leq  \epsilon
	\quad \mbox{for every }(b_i)_{i\in I}\in B_{\ell^p(I)}.
\end{equation}

For each $i\in I$, the sequence $(\rho_i(y_j^*))_j$ converges to~$0$ uniformly on the weakly compact set $\pi_i(L) \sub X_i$. 
Thus, we can find $j_1>j_0$ such that
\begin{equation}\label{eqn:yk-coordinates}
	 \big|\langle \rho_i(y_j^*),\pi_i(x) \rangle \big| \leq \frac{\epsilon}{|I_0|}
	 \quad\mbox{for every }j>j_1, \, i\in I_0 \mbox{ and }x\in L.
\end{equation}
Therefore, for every $j>j_1$ and $x\in L \sub B_X$ we have
\begin{multline*}
	\big|\langle y_j^*,x \rangle \big| \leq
	\sum_{i\in I} \big|\langle \rho_i(y_j^*),\pi_i(x) \rangle \big| \stackrel{\eqref{eqn:yk-coordinates}}{\leq} 
	\epsilon + \sum_{i\in I\setminus I_0} \big|\langle \rho_i(y_j^*),\pi_i(x) \rangle \big| \\
	\leq \epsilon + \sum_{i\in I\setminus I_0} \|\pi_i(x)\|_{X_i} \cdot \|\rho_i(y_j^*)\|_{X_i^*}  \stackrel{\eqref{eqn:cola-ww}}{\leq}
	2\epsilon.
\end{multline*}
This shows that $(y_j^*)_j$ is $\mu(X^*,X)$-null.
\end{proof}

\begin{theo}\label{theo:lpsums}
Let $\{X_i\}_{i\in I}$ be a family of Banach spaces having property~$(\mu^s)$ and let $1<p<\infty$.
Then $(\bigoplus_{i\in I}X_i)_{\ell^p}$ has property $(\mu^s)$.
\end{theo}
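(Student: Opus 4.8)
The plan is to reduce property~$(\mu^s)$ for $X := (\bigoplus_{i\in I} X_i)_{\ell^p}$ to an application of Lemma~\ref{lem:lpsums} to the Ces\`{a}ro means of a carefully chosen subsequence of a given $w^*$-null sequence. Write $q$ for the conjugate exponent, so that $X^* = (\bigoplus_{i\in I} X_i^*)_{\ell^q}$ with $1 < q < \infty$, and let $(x_n^*)_n$ be a $w^*$-null (hence bounded, say by~$c$) sequence in~$X^*$. The first observation is that, because $q < \infty$, each $x_n^*$ has countable support $\{i \in I : \rho_i(x_n^*) \neq 0\}$; hence the union $I_0$ of these supports over $n\in\N$ is countable, and $\rho_i(x_n^*) = 0$ for all $n$ whenever $i\notin I_0$. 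We may therefore assume that $I$ itself is countable, which is what makes the diagonal argument below possible.

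Next I would perform two successive extractions. For the first, consider the norm profiles $\zeta_n := (\|\rho_i(x_n^*)\|_{X_i^*})_{i\in I}$, which form a bounded sequence in $\ell^q(I)$ (indeed $\|\zeta_n\|_{\ell^q(I)} = \|x_n^*\|_{X^*}\leq c$). Since $1 < q < \infty$, the space $\ell^q(I)$ is super-reflexive and hence has the Banach-Saks property, so its bounded sets are Banach-Saks sets; by the consequence of the Erd\H{o}s--Magidor Theorem~\ref{theo:ErdosMagidor} recorded in the introduction, there is a subsequence of $(\zeta_n)_n$ all of whose subsequences are norm-Ces\`{a}ro convergent to a common limit. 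Pass to the corresponding subsequence of $(x_n^*)_n$, not relabeled. For the second extraction, note that for each $i\in I$ the sequence $(\rho_i(x_n^*))_n$ is $w^*$-null in~$X_i^*$ (since $\rho_i$ is the adjoint of the canonical embedding $X_i \to X$), and $X_i$ has property~$(\mu^s)$. Enumerating~$I$ and diagonalizing, I obtain a further subsequence, again not relabeled, such that for every $i\in I$ and every subsequence of it the $i$th coordinates are Ces\`{a}ro convergent to~$0$ with respect to $\mu(X_i^*,X_i)$. This subsequence retains the norm-profile property of the first extraction, as that property passes to subsequences.

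Finally I would verify property~$(\mu^s)$ for the subsequence just constructed. Given any further subsequence $(x_{n_k}^*)_k$, set $y_j^* := \frac{1}{j}\sum_{k=1}^j x_{n_k}^*$ and apply Lemma~\ref{lem:lpsums}. Hypothesis~(i) holds because $\rho_i(y_j^*)$ is the $j$th Ces\`{a}ro mean of $(\rho_i(x_{n_k}^*))_k$, which is $\mu(X_i^*,X_i)$-null by the second extraction. For hypothesis~(ii) I would take $\tilde{\xi}_j := \frac{1}{j}\sum_{k=1}^j \zeta_{n_k}$; the triangle inequality gives $\|\rho_i(y_j^*)\|_{X_i^*} \leq (\tilde{\xi}_j)_i$ coordinatewise, while $(\tilde{\xi}_j)_j$ is norm convergent in $\ell^q(I)$ by the first extraction. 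Lemma~\ref{lem:lpsums} then yields that $(y_j^*)_j$ is $\mu(X^*,X)$-null, i.e. $(x_{n_k}^*)_k$ is Ces\`{a}ro convergent to~$0$ in $\mu(X^*,X)$, which is exactly property~$(\mu^s)$. I expect the crux to be hypothesis~(ii): the key point is that the norm profiles of the $x_n^*$ live in the reflexive space $\ell^q(I)$, so one can force their Ces\`{a}ro means to converge in norm, and these means dominate coordinatewise the profiles of the Ces\`{a}ro means of $(x_{n_k}^*)_k$. Arranging this \emph{uniformly over all further subsequences} is precisely why the Banach-Saks extraction on the profiles must be carried out before the diagonalization on coordinates.
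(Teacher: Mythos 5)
Your proposal is correct and follows essentially the same route as the paper's own proof: the same Banach--Saks (Erd\H{o}s--Magidor) extraction applied to the norm profiles in $\ell^q(I)$, the same reduction to countable~$I$ via countable supports followed by a coordinatewise extraction (your direct enumerate-and-diagonalize step is exactly what the paper does by invoking Lemma~\ref{lem:TVSbelow-p}, as in the proof of Theorem~\ref{theo:l1sums}), and the same application of Lemma~\ref{lem:lpsums} with $\tilde{\xi}_j$ taken to be the Ces\`{a}ro means of the profiles. Your closing remark on why the profile extraction must precede the diagonalization, so that condition~(ii) holds uniformly over all further subsequences, correctly identifies the crux of the argument.
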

\begin{proof}
Write $X:=(\bigoplus_{i\in I}X_i)_{\ell^p}$. Let 
$(x_n^*)_n$ be a $w^*$-null sequence in~$X^*$. 
Define 
$$
	v_n:=(\|\rho_i(x_n^*)\|_{X_i^*})_{i\in I}\in \ell^q(I)
	\quad\mbox{for all }n\in \Nat,
$$ 
so that $\|v_n\|_{\ell^q(I)}=\|x_n^*\|_{X^*}$.
Since $(v_n)_n$ is bounded and~$\ell^q(I)$ has the Banach-Saks property, there exist 
a subsequence of~$(x_n^*)_n$, not relabeled, such that all subsequences of $(v_n)_n$ are 
Ces\`{a}ro convergent in norm (to the same limit).

On the other hand, since every element of~$X^*=(\bigoplus_{i\in I}X^*_i)_{\ell^q}$ is countably supported, we can assume without loss of generality
that $I$ is countable. Then, as in the proof of Theorem~\ref{theo:l1sums}, we can find a subsequence
of~$(x_n^*)_n$, not relabeled, such that for every further subsequence $(x_{n_k}^*)_k$ and
every $i\in I$, the sequence $(\rho_i(x^*_{n_k}))_k$ is Ces\`{a}ro convergent to~$0$ with respect to~$\mu(X_i^*,X_i)$.

We claim that any subsequence $(x_{n_k}^*)_k$ is Ces\`{a}ro convergent to~$0$ with respect to $\mu(X^*,X)$.
Indeed, define $y_j^*:=\frac{1}{j}\sum_{k=1}^j x_{n_k}^*$ for all $j\in \N$. 
We will show that $(y^*_j)_{j}$ is $\mu(X^*,X)$-null by checking that it satisfies conditions~(i) and~(ii) of Lemma~\ref{lem:lpsums}.
Obviously, (i) holds. On the other hand, for each $i\in I$ and $j\in \N$ we have
$$
	\|\rho_i(y_j^*)\|_{X_i^*} \leq \frac{1}{j}\sum_{k=1}^j\|\rho_i(x_{n_k}^*)\|_{X^*_i}=
	\psi_i\Big(\frac{1}{j}\sum_{k=1}^j v_{n_k}\Big),
$$
where $\psi_i\in \ell^q(I)^*$ denotes the $i$th-coordinate functional. Hence, condition~(ii)
of Lemma~\ref{lem:lpsums} holds by taking $\tilde{\xi}_j:=\frac{1}{j}\sum_{k=1}^j v_{n_k}$
for all $j\in \N$.
\end{proof}

\begin{rem}\label{rem:MUnoSWCG}
The previous result provides examples of separable Banach spaces having property~$(\mu^s)$ which 
do not embed isomorphically into any SWCG space, like $\ell^p(\ell^1)$ and $\ell^p(L^1[0,1])$ for $1<p<\infty$, see
\cite[Corollary~2.29]{kam-mer2}. 
\end{rem}

The following example answers in the negative a question left
open in \cite[Problem~2.19]{avi-rod}. It also shows that property $(\mu^s)$ 
is not preserved by countable $\ell^\infty$-sums.

\begin{exa}\label{exa:Johnson}
There is a sequence $(X_n)_n$ of finite-dimensional Banach spaces such that $(\bigoplus_{n\in \Nat}X_n)_{\ell^\infty}$ fails property~(K).
\end{exa}
\begin{proof}
Johnson~\cite{joh-2} proved the existence of a sequence $(X_n)_n$ of finite-dimensional Banach spaces such that, for every separable Banach space~$X$,
its dual $X^*$ is isomorphic to a {\em complemented} subspace of $(\bigoplus_{n\in \Nat}X_n)_{\ell^\infty}$.

Bearing in mind that property~(K) is inherited by complemented subspaces, the fact that the space
$(\bigoplus_{n\in \Nat}X_n)_{\ell^\infty}$ fails property~(K) follows from the existence of separable Banach spaces
whose dual fails property~(K), like $C[0,1]$ and the predual of the James tree space~$JT$. Indeed,
$C[0,1]^*$ is isomorphic to the $\ell^1$-sum of $\mathfrak{c}$ many copies of~$L^1[0,1]$
(see e.g. \cite[p.~242, Remark~5]{ros-J-4}) and so it fails property~(K), according to Pe\l czy\'{n}ski's example
mentioned in the introduction. On the other hand, $JT$ is not reflexive
and contains no subspace isomorphic to~$\ell^1$, hence $JT$ fails property~(K), see \cite[Theorem~2.1]{avi-rod}.
\end{proof}

\section{Applications}\label{section:families}

\subsection{Banach lattices}\label{subsection:lattices}

Given a Banach lattice~$X$, we write
$$
	L_w(X):=\{A \sub X: \, A\mbox{ is }L\mbox{-weakly compact}\}.
$$
Recall that a bounded set $A \sub X$ is said to be {\em $L$-weakly compact} if every disjoint sequence contained in 
$\bigcup_{x\in A}[-|x|,|x|]$ (the solid hull of~$A$) is norm null. 
Every $L$-weakly compact set is relatively weakly compact, while the converse does not hold in general.
$L$-weak compactness and relative weak compactness are equivalent
for subsets of the $L^1$-space of a non-negative measure. 
More generally, if $X$ has order continuous norm, then a set $A \sub X$ is $L$-weakly compact if and only if 
it is {\em approximately order bounded}, i.e. for every $\epsilon>0$ there is $x\in X^+$ such that $A \sub [-x,x]+\epsilon B_{X}$. For more information on
$L$-weakly compact sets, see \cite[\S 3.6]{mey2}.  

\begin{theo}\label{theo:lattice}
Let $X$ be a Banach lattice with order continuous norm and weak unit. Then $X$ has property~$(\mu^s_{L_w(X)})$.
\end{theo}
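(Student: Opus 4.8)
The plan is to reduce the family $L_w(X)$ to the family $\mathcal{O}:=\{[-u,u]:u\in X^+\}$ of order intervals, and then to use the order continuity of the norm to transport the whole question into an $L^1$-space, where a \emph{single} application of the weak Banach--Saks property disposes of all order intervals at once. First I would record the reduction. Since $X$ has order continuous norm, every $L$-weakly compact set is approximately order bounded, so $L_w(X)$ is strongly generated by $\mathcal{O}$; by Lemma~\ref{lem:TransferMU-StrongGeneration}(ii) it therefore suffices to prove that $X$ has property $(\mu^s_{\mathcal{O}})$. Next I would invoke the classical functional representation of a Banach lattice with order continuous norm and a weak unit: there is a finite measure $\mu$ for which $X$ is lattice isometric to an ideal of $L^1(\mu)$ with $L^\infty(\mu)\sub X\sub L^1(\mu)$ (continuous inclusions), the weak unit corresponding to~$\mathbf 1$. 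Order continuity identifies the dual with the K\"othe dual $X^*=X'$, realized as a function space with $L^\infty(\mu)\sub X^*\sub L^1(\mu)$; in particular $X^*\hookrightarrow L^1(\mu)$ is continuous, the modulus $|x^*|$ is the pointwise modulus of the representing function, and the pairing is $\langle x^*,f\rangle=\int x^*f\,d\mu$. Thus, for $u\in X^+$ and $g\in X^*$,
$$
	\sup_{|f|\le u}|\langle g,f\rangle|=\int |g|\,u\,d\mu .
$$

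Now let $(x_n^*)_n$ be a $w^*$-null (hence bounded, say by~$C$) sequence in $X^*$. Testing against $L^\infty(\mu)\sub X$ shows that $(x_n^*)_n$ is weakly null in $L^1(\mu)$, and it is $L^1$-bounded by continuity of the inclusion $X^*\hookrightarrow L^1(\mu)$. Since $L^1(\mu)$ has the weak Banach--Saks property, the relatively weakly compact sequence $(x_n^*)_n$ is a Banach--Saks set, so by the Erd\H{o}s--Magidor Theorem~\ref{theo:ErdosMagidor} (case~(ii) being excluded) there is a subsequence, not relabeled, such that every further subsequence $(x_{n_k}^*)_k$ has Cesàro means $g_N:=\frac1N\sum_{k=1}^N x_{n_k}^*$ convergent in $L^1(\mu)$-norm; the common limit is the weak limit~$0$, so $\int|g_N|\,d\mu\to 0$.

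The crux is to upgrade this $L^1$-norm smallness to $\sigma(X^*,X)$-smallness, that is, to $\int|g_N|\,u\,d\mu\to 0$ for every $u\in X^+$; this is exactly where order continuity is indispensable. Fixing $u\in X^+$ I would split $u=u\mathbf 1_{\{u\le M\}}+u\mathbf 1_{\{u>M\}}$. The bounded part satisfies $\int|g_N|\,u\mathbf 1_{\{u\le M\}}\,d\mu\le M\int|g_N|\,d\mu\to 0$ for fixed~$M$, while for the tail, using $\||g_N|\|_{X^*}=\|g_N\|_{X^*}\le C$ one gets $\int|g_N|\,u\mathbf 1_{\{u>M\}}\,d\mu\le C\,\|u\mathbf 1_{\{u>M\}}\|_X$; since $u\mathbf 1_{\{u>M\}}\downarrow 0$ in order as $M\to\infty$, order continuity of the norm makes this tail arbitrarily small uniformly in~$N$. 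Choosing $M$ and then $N$ accordingly yields $\int|g_N|\,u\,d\mu\to 0$, i.e. $\sup_{|f|\le u}|\langle g_N,f\rangle|\to 0$ for every $u\in X^+$. Hence the Cesàro means of every subsequence converge to~$0$ uniformly on each order interval, establishing $(\mu^s_{\mathcal{O}})$ and, by the first reduction, $(\mu^s_{L_w(X)})$.

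I expect the main obstacle to be precisely this final upgrade. The weak Banach--Saks property only delivers convergence in the coarse $L^1(\mu)$-norm, whereas property $(\mu^s_{L_w(X)})$ requires uniform control on \emph{every} order interval simultaneously; it is the order-continuity tail estimate, combined with the uniform $X^*$-bound on the Cesàro means, that allows a single Banach--Saks selection to serve all (possibly uncountably many) order intervals at once, thereby avoiding any cardinality restriction of the kind appearing in Theorem~\ref{theo:p}. The remaining points (the representation, the identity $X^*=X'$, and the identification of uniform convergence on $[-u,u]$ with $\int|g_N|\,u\,d\mu$) are standard and should require only careful bookkeeping.
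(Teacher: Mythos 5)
Your proposal is correct and takes essentially the same route as the paper: represent $X$ as a K\"othe function space over a finite measure, use order continuity to embed $X^*$ into $L^1(\nu)$ so that the $w^*$-null sequence becomes weakly null there, and apply the weak Banach--Saks property of $L^1(\nu)$ together with the Erd\H{o}s--Magidor Theorem~\ref{theo:ErdosMagidor} \emph{once}, so that a single subsequence serves all sets simultaneously. The only difference is bookkeeping: where you reduce to order intervals and carry out the $\{u>M\}$-truncation estimate by hand (using the uniform $X^*$-bound on the Ces\`{a}ro means and order continuity for the tail), the paper instead cites the fact that order continuity makes $L_w(X)$ strongly generated by $i(B_{L^\infty(\nu)})$ and invokes Lemma~\ref{lem:TransferMU-StrongGeneration}, reducing everything to that single ball, on which uniform convergence is exactly $L^1(\nu)$-norm convergence of the images under~$i^*$ --- so your truncation argument is in effect an inline proof of that cited strong-generation fact.
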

\begin{proof} Such a Banach lattice is order isometric to a Köthe function space over a finite measure space, see e.g. \cite[Theorem~1.b.14]{lin-tza-2}. 
So, we can assume that $X$ is a Köthe function space over a finite measure space, say~$(\Omega,\Sigma,\nu)$. 
Let $i:L^\infty(\nu)\to X$ be the inclusion operator. Since $X$ has order continuous norm, $i^*(X^*)\sub L^1(\nu)$ 
and so $i^*: X^*\to L^1(\nu)$ is $w^*$-$w$-continuous (see e.g. \cite[p.~29]{lin-tza-2}).

The order continuity of the norm also ensures that $L_w(X)$ is strongly generated by $i(B_{L^\infty(\nu)})$ 
(see e.g. \cite[Lemma~2.37(iii)]{oka-alt}). Therefore,
in order to prove that $X$ has property~$(\mu^s_{L_w(X)})$ we only have to check that $X$ has property~$(\mu^s_{\{i(B_{L^\infty(\nu)})\}})$
(by Lemma~\ref{lem:TransferMU-StrongGeneration}(ii)). 
To this end, let $(x^*_n)_n$ be a $w^*$-null sequence in~$X^*$. Then $(i^*(x^*_n))_n$ is weakly null in~$L^1(\nu)$, which has the weak Banach-Saks property. 
Hence there is a subsequence of~$(x^*_n)_n$, not relabeled, such that any further subsequence $(i^*(x^*_{n_k}))_k$ is Ces\`{a}ro convergent 
to~$0$ in the norm of~$L^1(\nu)$. In particular,
$$
	\lim_{N\to \infty}\sup_{f\in B_{L^\infty(\nu)}}\Big|\Big\langle \frac{1}{N}\sum_{k=1}^N x^*_{n_k},i(f)\Big\rangle\Big|
	= \lim_{N\to \infty}\Big\|\frac{1}{N}\sum_{k=1}^N i^*(x^*_{n_k})\Big\|_{L^1(\nu)}=0.
$$
This shows that $X$ has property~$(\mu^s_{\{i(B_{L^\infty(\nu)}\})})$.
\end{proof}

If $X$ is a weakly sequentially complete Banach lattice, then
$$
	L_w(X) \supseteq \mathcal{Q}(X):=\big\{T(C): \, T:L^1[0,1]\to X \mbox{ operator, } C \in wk(L^1[0,1])\big\}.
$$ 
Indeed, in this case $X$ has order continuous norm (see e.g. \cite[Theorems~2.4.2 and~2.5.6]{mey2}) and any operator $T:L^1[0,1]\to X$ is regular
(see e.g. \cite[Theorem~1.5.11]{mey2}), so it maps approximately order bounded (i.e. relatively weakly compact) subsets
of~$L^1[0,1]$ to approximately order bounded (i.e. $L$-weakly compact) subsets of~$X$. Thus, we get the following:

\begin{cor}\label{cor:k}
Let $X$ be a weakly sequentially complete Banach lattice with weak unit. Then $X$ has property~$(\mu^s_{\mathcal{Q}(X)})$.
\end{cor}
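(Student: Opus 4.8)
The plan is to derive Corollary~\ref{cor:k} directly from Theorem~\ref{theo:lattice} together with the transfer machinery already established in Lemma~\ref{lem:TransferMU-StrongGeneration}(ii). The paragraph immediately preceding the corollary has already done the structural work, establishing that for a weakly sequentially complete Banach lattice~$X$ with weak unit we have the inclusion $\mathcal{Q}(X) \sub L_w(X)$. So the logical skeleton is very short: Theorem~\ref{theo:lattice} gives property~$(\mu^s_{L_w(X)})$, and I want to descend from the larger family~$L_w(X)$ to the smaller family~$\mathcal{Q}(X)$.

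First I would record that $X$ does indeed satisfy the hypotheses of Theorem~\ref{theo:lattice}. The only nontrivial point is that weak sequential completeness forces the norm to be order continuous; this is exactly the cited fact (\cite[Theorems~2.4.2 and~2.5.6]{mey2}) invoked in the preamble, so I can quote it. Once $X$ has order continuous norm and a weak unit, Theorem~\ref{theo:lattice} applies verbatim and yields property~$(\mu^s_{L_w(X)})$.

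Next I would make the monotonicity of the family parameter explicit, since this is the genuine content of the reduction. Property $(\mu^s_\mathcal{A})$ requires uniform Ces\`{a}ro convergence on each element of~$\mathcal{A}$; if $\mathcal{A}' \sub \mathcal{A}$, then any witnessing subsequence for~$\mathcal{A}$ automatically witnesses it for~$\mathcal{A}'$, so $(\mu^s_\mathcal{A}) \impli (\mu^s_{\mathcal{A}'})$. Applying this with $\mathcal{A}=L_w(X)$ and $\mathcal{A}'=\mathcal{Q}(X)$, using the inclusion $\mathcal{Q}(X)\sub L_w(X)$ established before the statement, delivers property~$(\mu^s_{\mathcal{Q}(X)})$ at once. (One could alternatively route this through Lemma~\ref{lem:TransferMU-StrongGeneration}(ii) by noting $\mathcal{Q}(X)$ is trivially ``strongly generated'' by~$L_w(X)$, since each element of~$\mathcal{Q}(X)$ already sits inside an element of~$L_w(X)$; but the direct monotonicity observation is cleaner.)

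**The main obstacle** is not in the descent step, which is essentially trivial once phrased correctly, but in justifying the inclusion $\mathcal{Q}(X) \sub L_w(X)$ itself — and this is precisely the part the author has already carried out in the preceding paragraph. The delicate ingredients there are that every operator $T:L^1[0,1]\to X$ into a weakly sequentially complete (hence order continuous) Banach lattice is regular (\cite[Theorem~1.5.11]{mey2}), and that regular operators carry approximately order bounded sets to approximately order bounded sets, combined with the identification (under order continuity) of $L$-weak compactness with approximate order boundedness and of relative weak compactness in~$L^1[0,1]$ with the same. Since all of this is granted by the text preceding the corollary, my written proof can simply cite the inclusion and invoke Theorem~\ref{theo:lattice} with the monotonicity remark.
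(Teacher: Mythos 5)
Your proposal is correct and follows exactly the paper's route: the paper derives Corollary~\ref{cor:k} from Theorem~\ref{theo:lattice} via the inclusion $\mathcal{Q}(X)\subseteq L_w(X)$ established in the paragraph preceding the statement (weak sequential completeness gives order continuity, operators $T:L^1[0,1]\to X$ are regular, and regular operators preserve approximate order boundedness), with the descent to the smaller family being the trivial monotonicity of $(\mu^s_{\mathcal{A}})$ in~$\mathcal{A}$. Your explicit spelling-out of that monotonicity step is a harmless elaboration of what the paper leaves implicit.
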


A Banach lattice is said to have the {\em positive Schur property (PSP)} if every weakly null sequence
of positive vectors is norm null. Obviously, this property is satisfied by the $L^1$-space of any non-negative measure. The PSP 
implies weak sequential completeness (see e.g. \cite[Theorem~2.5.6]{mey2}) and so the order continuity of the norm. 
On the other hand, it is known that the PSP is equivalent to saying that every relatively weakly compact set
is $L$-weakly compact (see e.g. \cite[Theorem~3.14]{gao-xan}).  So, Theorem~\ref{theo:lattice} implies that {\em every Banach lattice with the PSP and weak unit
has property~$(\mu^s)$}. In fact, such a Banach lattice is S$^2$WCG, as we show in Corollary~\ref{cor:PSP} below. The key is the following
lemma (cf. \cite[Corollary~5.6]{flo-her-ray}):

\begin{lem}\label{lem:Kothe}
Let $X$ be a Köthe function space with order continuous norm over a finite measure space $(\Omega,\Sigma,\nu)$. Then the inclusion 
operator $i:L^\infty(\nu) \to X$ is super weakly compact.
\end{lem}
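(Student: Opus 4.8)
The plan is to reduce the statement to the fact that the order interval $[-\mathbf{1},\mathbf{1}]$ in~$X$ (where $\mathbf{1}$ denotes the constant function, which belongs to~$X$ since $\nu$ is finite) is super weakly compact, and to prove the latter by extracting a \emph{uniform, finitary} form of order continuity that survives the passage to ultrapowers. First I would note that $i(B_{L^\infty(\nu)})=[-\mathbf{1},\mathbf{1}]$ is a bounded, closed, convex and symmetric subset of~$X$. By the definition of super weak compactness recalled in the paragraph before Lemma~\ref{lem:Kothe}, it suffices to show that for every free ultrafilter~$\mathcal{U}$ on~$\N$ the ultrapower $i^{\mathcal{U}}:L^\infty(\nu)^{\mathcal{U}}\to X^{\mathcal{U}}$ is weakly compact. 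Since $i^{\mathcal{U}}$ maps the unit ball of $L^\infty(\nu)^{\mathcal{U}}$ into the order interval $[-e,e]$ of the Banach lattice ultrapower~$X^{\mathcal{U}}$, where $e$ is the class of the constant sequence $(\mathbf{1})_n$ (one truncates representatives to bring them below~$\mathbf{1}$, with an error whose norm vanishes along~$\mathcal{U}$), the whole problem becomes: show that $[-e,e]$ is relatively weakly compact in~$X^{\mathcal{U}}$.

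The essential input is a quantitative version of order continuity. Because $X$ has order continuous norm and $\nu$ is finite, for every $\epsilon>0$ there is $\delta>0$ with $\|\chi_A\|_X\le \epsilon$ whenever $\nu(A)<\delta$; otherwise one finds sets $A_n$ with $\nu(A_n)\to 0$ and $\|\chi_{A_n}\|_X\ge\epsilon$, and the tails $B_n:=\bigcup_{k\ge n}A_k$ decrease to a null set while $\|\chi_{B_n}\|_X\ge\epsilon$, contradicting order continuity. I would then recast this as a finitary statement that makes no reference to the measure: for every $\epsilon>0$ there is an integer $N=N(\epsilon)$ such that every disjoint family $a_1,\dots,a_N$ in~$X$ with $0\le a_k\le \mathbf{1}$ satisfies $\min_{k}\|a_k\|_X\le \epsilon$. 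Indeed such $a_k$ are supported on pairwise disjoint sets~$A_k$ with $a_k\le \chi_{A_k}$; if all their norms exceeded~$\epsilon$ then $\nu(A_k)\ge\delta(\epsilon)$ for each~$k$, forcing $N\,\delta(\epsilon)\le\nu(\Omega)$, so $N(\epsilon):=\lfloor \nu(\Omega)/\delta(\epsilon)\rfloor+1$ does the job.

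The heart of the argument, and the step I expect to be the main obstacle, is to transfer this finitary bound to the ultrapower. In Heinrich's framework for ultraproducts of Banach lattices, $X^{\mathcal{U}}$ is again a Banach lattice with $|(x_n)_{\mathcal{U}}|=(|x_n|)_{\mathcal{U}}$, and two positive elements are disjoint precisely when they admit representatives with $\lim_{\mathcal{U}}\|x_n\wedge y_n\|_X=0$. Given disjoint $\alpha_1,\dots,\alpha_N$ in~$X^{\mathcal{U}}$ with $0\le\alpha_k\le e$, I would choose representatives $0\le a_k^n\le\mathbf{1}$ and replace them by genuinely disjoint elements $\tilde a_k^n:=(a_k^n-\sum_{j\ne k}a_j^n)^+$, which still satisfy $0\le\tilde a_k^n\le\mathbf{1}$ and differ from $a_k^n$ by an amount whose norm tends to~$0$ along~$\mathcal{U}$ (controlled by the overlaps $a_k^n\wedge a_j^n$). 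Applying the finitary bound to $\tilde a_1^n,\dots,\tilde a_N^n$ in~$X$ for $\mathcal{U}$-most~$n$ and passing to the limit yields $\min_k\|\alpha_k\|_{X^{\mathcal{U}}}\le\epsilon$. Thus the uniform disjointness bound holds in~$X^{\mathcal{U}}$, which means exactly that every disjoint sequence in the solid interval $[-e,e]$ is norm null; hence $[-e,e]$ is $L$-weakly compact in~$X^{\mathcal{U}}$ and therefore relatively weakly compact. Consequently $i^{\mathcal{U}}$ is weakly compact for every~$\mathcal{U}$, and $i$ is super weakly compact.

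The delicate points where I would be most careful are the disjointification estimate inside the lattice ultrapower and the identification of the image of the unit ball of $L^\infty(\nu)^{\mathcal{U}}$ with a subset of the order interval $[-e,e]$. Alternatively, one could bypass the explicit ultrapower computation by invoking a characterization of super weakly compact sets in Banach lattices phrased through such uniform disjointness moduli, in the spirit of \cite{flo-her-ray}; in either case, the quantitative equi-integrability produced in the second paragraph is the decisive ingredient.
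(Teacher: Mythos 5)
Your proposal is correct, but it takes a genuinely different route from the paper's. The paper argues by duality and citation: since $X$ has order continuous norm, its order intervals are weakly compact (\cite[Theorem~2.4.2]{mey2}), so $i$ is weakly compact and hence so is $i^*:X^*\to L^1(\nu)$; Beauzamy's theorem that every weakly compact operator taking values in the $L^1$-space of a finite measure is super weakly compact (\cite[p.~123, Remarque]{bea1}) then applies to $i^*$, and super weak compactness passes between an operator and its adjoint (\cite[Proposition~II.4]{bea1}), giving the conclusion for $i$. You instead verify the ultrapower definition directly: you extract from order continuity plus finiteness of $\nu$ a uniform disjointness modulus (among any $N(\epsilon)$ pairwise disjoint elements of $[0,\mathbf{1}]$ some element has norm at most $\epsilon$), transfer it to $X^{\mathcal{U}}$ via the disjointification $\tilde a_k:=\bigl(a_k-\sum_{j\neq k}a_j\bigr)^+$ together with the lattice estimate $a_k-\tilde a_k=a_k\wedge\sum_{j\neq k}a_j\leq\sum_{j\neq k}a_k\wedge a_j$, and conclude that $[-e,e]$ is $L$-weakly compact, hence relatively weakly compact, in $X^{\mathcal{U}}$. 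Each step checks out, including the tacit pigeonhole over the ultrafilter needed to fix a single index $k_0$ attaining $\|\tilde a^n_{k_0}\|\leq\epsilon$ on a set belonging to $\mathcal{U}$; and your detour through $L$-weak compactness rather than weak compactness of order intervals is exactly the right move, since order continuity is not a super-property and need not survive the passage to $X^{\mathcal{U}}$. What the paper's route buys is brevity, at the price of invoking Beauzamy's nontrivial result on operators into $L^1$; what yours buys is a self-contained, quantitative argument that in effect re-proves the relevant special case of that result and yields the formally stronger conclusion that $i(B_{L^\infty(\nu)})$ is $L$-weakly compact in every ultrapower. Your closing remark is also on target: the alternative of quoting a uniform-disjointness characterization of super weak compactness in lattices is essentially \cite[Corollary~5.6]{flo-her-ray}, which the paper itself cites as a companion reference for this lemma.
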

\begin{proof}
Since $X$ has order continuous norm, every order interval of~$X$ is weakly compact (see e.g. \cite[Theorem~2.4.2]{mey2}). Hence
$i$ is weakly compact and so is $i^*: X^*\to L^1(\nu)$ (see the proof of Theorem~\ref{theo:lattice}). Every weakly compact operator taking values in
the $L^1$-space of a finite measure is super weakly compact, see e.g. \cite[p.~123, Remarque]{bea1} (cf. \cite[Proposition~5.5]{flo-her-ray}). Therefore,
$i^*$ is super weakly compact and the same holds for~$i$ (see \cite[Proposition~II.4]{bea1}). 
\end{proof}

\begin{cor}\label{cor:PSP}
Let $X$ be a Banach lattice with the positive Schur property and weak unit. Then $X$ is S$^2$WCG.
\end{cor}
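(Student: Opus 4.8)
The plan is to produce a concrete super weakly compact operator that strongly generates~$X$, recycling the machinery already set up for Theorem~\ref{theo:lattice} and Lemma~\ref{lem:Kothe}. First I would invoke the structure theory: since $X$ has the positive Schur property it has order continuous norm (as recalled just above), so being a Banach lattice with order continuous norm and weak unit it is order isometric to a K\"othe function space over some finite measure space $(\Omega,\Sigma,\nu)$ (via \cite[Theorem~1.b.14]{lin-tza-2}). I may therefore assume that $X$ is such a K\"othe space and work with the inclusion operator $i:L^\infty(\nu)\to X$.

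The candidate generating operator is precisely~$i$. By Lemma~\ref{lem:Kothe}, $i$ is super weakly compact, so it only remains to verify that $i$ strongly generates~$X$ in the sense of Definition~\ref{defi:SGoperator}, that is, that $wk(X)$ is strongly generated by $i(B_{L^\infty(\nu)})$. This is where the positive Schur property enters crucially. The order continuity of the norm already guarantees that $L_w(X)$ is strongly generated by $i(B_{L^\infty(\nu)})$ (the fact used in the proof of Theorem~\ref{theo:lattice}, via \cite[Lemma~2.37(iii)]{oka-alt}). On the other hand, the positive Schur property is equivalent to the statement that every relatively weakly compact subset of~$X$ is $L$-weakly compact (as recalled via \cite[Theorem~3.14]{gao-xan}). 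Since every weakly compact set is in particular relatively weakly compact, this gives the inclusion $wk(X)\sub L_w(X)$. Combining the two facts, the strong generation of $L_w(X)$ by $i(B_{L^\infty(\nu)})$ transfers at once to $wk(X)$: for every $H\in wk(X)$ we have $H\in L_w(X)$, so for each $\epsilon>0$ there is $n\in\Nat$ with $H\sub n\,i(B_{L^\infty(\nu)})+\epsilon B_X$. Thus $i$ is a super weakly compact operator strongly generating~$X$, which is exactly the definition of S$^2$WCG.

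The argument is essentially an assembly of previously established pieces, so I do not anticipate a genuine obstacle. The only subtleties worth flagging are the reduction to a K\"othe space (needed in order to apply Lemma~\ref{lem:Kothe}) and the observation that the positive Schur property is precisely what is required to upgrade ``strongly generates $L_w(X)$'' to ``strongly generates $wk(X)$''; once these are in place the conclusion is immediate.
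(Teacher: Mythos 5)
Your proposal is correct and follows exactly the paper's own argument: reduce (via order continuity, which follows from the positive Schur property, plus the weak unit) to a K\"othe function space over a finite measure, take the inclusion $i:L^\infty(\nu)\to X$, invoke Lemma~\ref{lem:Kothe} for super weak compactness, and use the identification of weakly compact sets with $L$-weakly compact sets to upgrade strong generation of $L_w(X)$ to strong generation of $wk(X)$. The only difference is that you spell out the transfer step that the paper compresses into the parenthetical ``bear in mind that every weakly compact subset of~$X$ is $L$-weakly compact,'' which is a welcome clarification rather than a deviation.
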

\begin{proof}
As in Theorem~\ref{theo:lattice}, we can assume that $X$ is a Köthe function space over a finite measure space
$(\Omega,\Sigma,\nu)$. Then $X$ is strongly generated by the inclusion operator $i:L^\infty(\nu) \to X$
(bear in mind that every weakly compact subset of~$X$ is $L$-weakly compact). 
On the other hand, $i$ is super weakly compact by Lemma~\ref{lem:Kothe}. 
\end{proof}

The previous corollary is an improvement of \cite[Proposition~5.6]{cal-alt-6}, where it was shown that
such Banach lattices are SWCG.

\begin{rem}\label{rem:weakunit}
The conclusion of Theorem~\ref{theo:lattice} and Corollaries~\ref{cor:k} and~\ref{cor:PSP} can fail in the
absence of weak unit. For instance, let $X$ be the $\ell^1$-sum of $\mathfrak{c}$ many copies of~$L^1[0,1]$. 
Then $X$ has the PSP and fails property~$(\mu_{\mathcal{Q}(X)}^s)$, since it does not have 
property~(k) (see \cite[Example~4.I]{fig-alt2}).
\end{rem}

\subsection{Lebesgue-Bochner spaces}\label{subsection:Lebesgue-Bochner}

The first result of this subsection is based on the proof of Emmanuele's result~\cite{emm} on complemented copies of~$c_0$
in Lebesgue-Bochner spaces, cf. \cite[Theorem~4.3.2]{cem-men}.

\begin{theo}\label{theo:L1X-c0}
Let $X$ be a Banach space containing a subspace isomorphic to~$c_0$. Then $L^1([0,1],X)$ fails property (K$_{\delta\mathcal{S}}$).
\end{theo}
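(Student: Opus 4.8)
The plan is to recycle the building blocks of Emmanuele's embedding of~$c_0$ into $L^1([0,1],X)$ in order to produce a $w^*$-null sequence in $L^1([0,1],X)^*$ no convex block subsequence of which converges to~$0$ uniformly on suitable $\delta\mathcal{S}$-sets. Fix a sequence $(x_n)_n$ in~$X$ equivalent to the unit vector basis of~$c_0$, say $a\sup_n|c_n|\leq \|\sum_n c_n x_n\|\leq b\sup_n|c_n|$ for all finitely supported scalars $(c_n)$, and let $(x_n^*)_n$ be biorthogonal functionals with $\langle x_n^*,x_m\rangle=\delta_{nm}$ and $\sup_n\|x_n^*\|=:C<\infty$ (transport the coordinate functionals of~$c_0$ and extend by Hahn--Banach). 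Let $(r_n)_n$ be the Rademacher functions on~$[0,1]$. I define $f_n\in L^1([0,1],X)$ by $f_n(t):=r_n(t)x_n$ and $\Phi_n\in L^1([0,1],X)^*$ by
\[
	\Phi_n(g):=\int_0^1 r_n(t)\,\langle x_n^*,g(t)\rangle\,dt .
\]
Then $\|\Phi_n\|\leq C$, and since $\langle x_n^*,f_m(t)\rangle=r_m(t)\delta_{nm}$ and $\int_0^1 r_n r_m=\delta_{nm}$, one gets the biorthogonality relation $\Phi_n(f_m)=\delta_{nm}$.

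First I would check that $(\Phi_n)_n$ is $w^*$-null. As $\sup_n\|\Phi_n\|<\infty$, by density it suffices to test on simple functions $g=\sum_j \chi_{B_j}v_j$; for these $\Phi_n(g)=\sum_j\langle x_n^*,v_j\rangle\int_{B_j}r_n\,dt$, and each $\int_{B_j}r_n\,dt\to 0$ (the $r_n$ tend to~$0$ in $\sigma(L^\infty,L^1)$) while $|\langle x_n^*,v_j\rangle|\leq C\|v_j\|$ stays bounded, so $\Phi_n(g)\to 0$.

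Next, given any convex block subsequence $\Psi_k=\sum_{n\in I_k}a_n\Phi_n$ (with $\sum_{n\in I_k}a_n=1$ and $\max I_k<\min I_{k+1}$), I would set $g_k:=\sum_{n\in I_k}f_n\in L^1([0,1],X)$, so that $g_k(t)=\sum_{n\in I_k}r_n(t)x_n$. The upper $c_0$-estimate gives $\|g_k(t)\|\leq b$ for every~$t$, while the biorthogonality relation yields $\Psi_k(g_k)=\sum_{n\in I_k}a_n=1$ for every~$k$. Taking $K:=\{g_k:k\in\Nat\}\cup\{0\}$ we then have $\sup_{g\in K}|\Psi_k(g)|\geq 1$ for all~$k$, so $(\Psi_k)_k$ does not converge to~$0$ uniformly on~$K$.

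The main obstacle is to verify that this $K$ is a $\delta\mathcal{S}$-set. Uniform integrability is immediate, since $\|g_k(t)\|\leq b$ for all~$t$ and~$k$ makes~$K$ uniformly bounded on the finite measure space~$[0,1]$. For the second requirement I would exhibit a single weakly compact set~$W\sub X$ containing all the values $g_k(t)$: each $g_k(t)$ equals $\sum_{n\in I_k}\epsilon_n x_n$ for signs $\epsilon_n=\pm1$, i.e. the image under the isomorphism $c_0\to\overline{\mathrm{span}}\{x_n\}$ of a block-sign vector $v_{k,\epsilon}=\sum_{n\in I_k}\epsilon_n e_n\in B_{c_0}$. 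Since $\max I_k<\min I_{k+1}$, for every $y\in\ell^1$ we get $|\langle y,v_{k,\epsilon}\rangle|\leq\sum_{n\in I_k}|y_n|\to 0$ as $k\to\infty$; hence every sequence of such vectors has a weakly convergent subsequence in~$c_0$ (to~$0$ when $k\to\infty$, eventually constant when $k$ stays bounded), so by Eberlein--\v{S}mulian the set of all~$v_{k,\epsilon}$ is relatively weakly compact, and so is its image~$S$ in~$X$. Letting $W$ be the weak closure of~$S$, every $g_k$ maps $[0,1]$ into~$W$, which trivially meets the $\delta\mathcal{S}$-condition. Thus $K\in\delta\mathcal{S}$, and since the construction applies to every convex block subsequence, $(\Phi_n)_n$ witnesses that $L^1([0,1],X)$ fails property~(K$_{\delta\mathcal{S}}$).
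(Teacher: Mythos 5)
Your proof is correct, and its combinatorial core is the same as the paper's: Rademachers tensored with a $c_0$-sequence, biorthogonality against the block sums $g_k=\sum_{n\in I_k}r_n(\cdot)x_n$ to get $\Psi_k(g_k)=1$, and the verification that $\{g_k:k\in\mathbb{N}\}$ is a $\delta\mathcal{S}$-set because the set of all sign combinations $\sum_{n\in I_k}\epsilon_n x_n$ is relatively weakly compact (your Eberlein--\v{S}mulian dichotomy between bounded and increasing $k$ is exactly the paper's Lemma on $\delta\mathcal{S}$-sets, stated there slightly more generally for arbitrary $\{-1,1\}$-valued measurable $h_n$ in place of the $r_n$). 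Where you genuinely diverge is in producing the $w^*$-null sequence. The paper invokes Emmanuele's theorem (via Cembranos--Mendoza) that $Z=\overline{\mathrm{span}}\{r_n(\cdot)x_n\}$ is \emph{complemented} in $L^1([0,1],X)$, forms $S=T\circ P:L^1([0,1],X)\to c_0$, and takes $(S^*(e_n^*))_n$, whose $w^*$-nullity is automatic from the $w^*$-$w^*$-continuity of adjoints. You instead build $\Phi_n=r_n(\cdot)\otimes x_n^*$ directly from Hahn--Banach extensions of the coordinate functionals and check $w^*$-nullity by hand on simple functions, using $r_n\to 0$ in $\sigma(L^\infty,L^1)$ together with the uniform bound $\|\Phi_n\|\leq C$ and density; all of these verifications are sound. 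This buys self-containedness: you never need the (nontrivial) complementation result, only the biorthogonal system --- in effect you inline precisely the ingredient of Emmanuele's construction that the theorem actually uses, since the projection in that proof is $g\mapsto\sum_n\Phi_n(g)\,r_n(\cdot)x_n$ built from these same functionals. The paper's route is shorter given the cited literature; yours exposes that the full strength of the complemented embedding of $c_0$ is not needed for the failure of property (K$_{\delta\mathcal{S}}$). One cosmetic remark: adjoining $0$ to your set $K$ is unnecessary (and harmless, as $0$ lies in the weak closure $W$ anyway), since $\{g_k:k\in\mathbb{N}\}$ alone already witnesses the non-uniform convergence.
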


For the proof of Theorem~\ref{theo:L1X-c0} we need a lemma.

\begin{lem}\label{lem:c0}
Let $X$ be a Banach space containing a $c_0$-sequence $(x_n)_n$.
Let $(h_n)_n$ be a sequence of $\{-1,1\}$-valued measurable functions on~$[0,1]$
and let $(I_k)_k$ be a sequence of finite subsets of~$\Nat$ with $\max(I_k) < \min(I_{k+1})$
for all $k\in \Nat$. Then 
$$
	\Big\{\sum_{n\in I_k}h_n(\cdot) x_n: \, k\in \Nat\Big\}
$$ 
is a $\delta\cS$-set of $L^1([0,1],X)$.
\end{lem}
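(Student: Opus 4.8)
The plan is to verify directly the two requirements of Definition~\ref{defi:deltaS-set} for the set $K:=\{g_k:k\in\Nat\}$, where $g_k(t):=\sum_{n\in I_k}h_n(t)x_n$. Let $a,b>0$ be the constants of the $c_0$-sequence, so that $a\sup_n|c_n|\leq\|\sum_n c_n x_n\|\leq b\sup_n|c_n|$ for all finitely supported scalar sequences $(c_n)$. Since each $h_n$ takes values in $\{-1,1\}$, the upper estimate gives $\|g_k(t)\|_X\leq b$ for every $t\in[0,1]$ and every $k\in\Nat$. Thus $K$ is uniformly bounded and, as $\nu$ is finite, uniformly integrable; this settles the first requirement at once.

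For the second requirement I will prove something stronger, namely that the entire range
$$
	R:=\{g_k(t):k\in\Nat,\ t\in[0,1]\}\subseteq X
$$
is relatively weakly compact. Granting this, its weak closure $W:=\overline{R}^{\,w}$ is a \emph{single} weakly compact set with $g_k(t)\in W$ for all $t$ and all $k$, so that $\nu(g_k^{-1}(W))=1\geq 1-\delta$ for every $\delta>0$, which is more than enough. To prove that $R$ is relatively weakly compact I would pass to~$c_0$. The closed linear span $Y:=\overline{\operatorname{span}}\{x_n:n\in\Nat\}$ is isomorphic to~$c_0$ via the operator $\Phi$ sending the unit vector $e_n$ to~$x_n$; being an isomorphism onto~$Y$, $\Phi$ is a weak-to-weak homeomorphism, hence it carries relatively weakly compact subsets of~$c_0$ onto relatively weakly compact subsets of~$Y$ (equivalently of~$X$, since $Y$ is a closed subspace). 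Therefore it suffices to check that
$$
	\tilde R \sub \Big\{\textstyle\sum_{n\in I_k}\varepsilon_n e_n:\ k\in\Nat,\ \varepsilon\in\{-1,1\}^{I_k}\Big\}
$$
is relatively weakly compact in~$c_0$, where $\tilde R:=\Phi^{-1}(R)$.

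Here I would invoke the Eberlein--Smulian theorem together with the elementary fact that a bounded sequence in~$c_0$ converges weakly if and only if it converges coordinatewise to an element of~$c_0$. Given any sequence $(z_j)_j$ in~$\tilde R$, with $z_j$ supported on~$I_{k_j}$, there are two cases. If $(k_j)_j$ has a bounded subsequence, then some index repeats infinitely often, producing a subsequence lying in a finite set, which is trivially weakly convergent. If $k_j\to\infty$, then since the blocks satisfy $\max(I_k)<\min(I_{k+1})$ we have $\min(I_{k_j})\to\infty$, so each fixed coordinate of~$z_j$ is eventually~$0$; hence $(z_j)_j$ converges coordinatewise to~$0\in c_0$, and therefore weakly to~$0$. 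In every case a weakly convergent subsequence exists, so $\tilde R$ is relatively weakly compact.

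The only genuinely delicate point is this relative weak compactness. Since $c_0$ is non-reflexive, bounded sets need not be relatively weakly compact (had the supports been, for instance, initial segments $\{1,\dots,m\}$, the vectors $\sum_{n=1}^{m}e_n$ would converge coordinatewise to a vector outside~$c_0$, and $R$ would fail to be relatively weakly compact), so one might fear the $\pm1$ sums behave like a non-relatively-weakly-compact family. The resolution is exactly the hypothesis that the supports $I_k$ are finite and strictly increasing: each $g_k(t)$ lives on a single block, so as $k\to\infty$ the vectors slide off to infinity coordinatewise and converge weakly to~$0$. Everything else is routine.
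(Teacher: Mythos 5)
Your proof is correct and takes essentially the same route as the paper's: both reduce the lemma to the relative weak compactness of the set of block sums $\bigl\{\sum_{n\in I_k}\varepsilon_n x_n : k\in\mathbb{N},\ \varepsilon\in\{-1,1\}^{I_k}\bigr\}$ and prove it by the same sequential dichotomy (a repeated block index, giving a sequence in a finite-dimensional situation, versus blocks sliding off to infinity, giving a weakly null sequence), the only cosmetic difference being that the paper sees the escaping blocks as again forming a $c_0$-sequence while you transfer to $c_0$ via the isomorphism and use boundedness plus coordinatewise convergence. Your explicit check of uniform integrability via the uniform bound $\|g_k(t)\|_X\leq b$, which the paper leaves implicit, is also fine.
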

\begin{proof}
It suffices to show that the set
$$
	C:=\Big\{\sum_{n\in I_k}a_n x_n:\, k\in \Nat, \, (a_n)_{n\in I_k} \in \{-1,1\}^{I_k}\Big\}
$$ 
is relatively weakly compact in~$X$. To this end, let $(y_m)_m$ be a sequence in~$C$.
For each $m\in \Nat$ we write
$$
	y_m=\sum_{n\in I_{k_m}}a_{n,m} x_n
$$
for some $k_m\in \Nat$ and $a_{n,m}\in \{-1,1\}$. By passing to a subsequence, not relabeled, we
can assume that one of the following alternatives holds:
\begin{itemize}
\item There is $k\in \Nat$ such that $k_m=k$ for all $m\in \Nat$. In this case,
$(y_m)_m$ is a bounded sequence in a finite-dimensional subspace of~$X$ and, therefore, it admits
a norm convergent subsequence.
\item $k_m<k_{m+1}$ for all $m\in \Nat$. In this case, since $(x_n)_n$ is a $c_0$-sequence, 
the same holds for~$(y_m)_m$ and so it is weakly null.
\end{itemize}
Thus $C$ is relatively weakly compact, as required.
\end{proof}

\begin{proof}[Proof of Theorem~\ref{theo:L1X-c0}]
We denote by $(e_n)_n$ and $(e^*_n)_n$ the usual bases of~$c_0$ and~$\ell^1$, respectively.
Let $(x_n)_n$ be a $c_0$-sequence in~$X$ and let $(r_n)_n$ be the sequence of Rademacher functions on~$[0,1]$. Then $(r_n(\cdot)x_n)_n$ is a $c_0$-sequence
in $L^1([0,1],X)$ which spans a complemented subspace
$$
	Z:=\overline{{\rm span}}\{r_{n}(\cdot)x_{n}: \, n\in \Nat\} \sub L^1([0,1],X),
$$ 
see e.g. the proof of Theorem~4.3.2 in~\cite{cem-men}. Let $P:L^1([0,1],X)\to Z$
be a projection and let $T:Z \to c_0$ be the isomorphism satisfying $T(r_{n}(\cdot) x_{n})=e_n$ for all $n\in \Nat$. 
Consider the operator $S:=T\circ P:L^1([0,1],X)\to c_0$. Note that $(S^*(e_n^*))_n$ is a
$w^*$-null sequence in~$L^1([0,1],X)^*$.

{\em Claim.} $(S^*(e_n^*))_n$ does not admit convex block subsequences
converging to~$0$ uniformly on each $\delta\cS$-set. 
Indeed, let $(g_k)_k$ be any convex block subsequence of~$(S^*(e_n^*))_n$. Write $g_k=\sum_{n\in I_k}a_n S^*(e_n^*)$, where
$(I_k)_k$ is a sequence of finite subsets of~$\Nat$ with $\max(I_k) < \min(I_{k+1})$ and $a_n\geq 0$
satisfy $\sum_{n\in I_k}a_n=1$. 
Then for each $k\in \Nat$ we have
\begin{multline*}
	\Big\langle g_k, \sum_{n\in I_k}r_{n}(\cdot) x_{n} \Big\rangle=
	\Big\langle \sum_{n\in I_k}a_n e_n^*, \sum_{n\in I_k}S(r_{n}(\cdot) x_{n}) \Big\rangle \\ =
	\Big\langle \sum_{n\in I_k}a_n e_n^*, \sum_{n\in I_k}e_n \Big\rangle =
	\sum_{n\in I_k}a_n=1.
\end{multline*}
Hence $(g_k)_k$ does not converge to~$0$ uniformly on the $\delta\cS$-set 
$$
	\Big\{\sum_{n\in I_k}r_{n}(\cdot) x_{n}: \, k\in \Nat\Big\}
$$ 
(Lemma~\ref{lem:c0}). This proves that $L^1([0,1],X)$ fails property (K$_{\delta\mathcal{S}}$).
\end{proof}

From now on $(\Omega,\Sigma,\nu)$ is a finite measure space. Given a Banach space~$X$, the identity operator
$i: L^2(\nu,X) \to L^1(\nu,X)$
strongly generates $L^1(\nu,X)$. Indeed, this can be checked as in the case of real-valued functions, bearing in mind that
any weakly compact subset of $L^1(\nu,X)$ is uniformly integrable (see e.g. \cite[p.~104, Theorem~4]{die-uhl-J}).
On the other hand, $L^2(\nu,X)$ is super-reflexive whenever $X$ is super-reflexive, see \cite{day} (cf. \cite[Ch.~IV, Corollary~4.5]{dev-alt-J}). 
In particular, $L^1(\nu,X)$ is strongly generated by a super-reflexive space
(and so it has property~$(\mu^s)$, by Corollary~\ref{cor:Generation}) whenever $X$ is super-reflexive.

\begin{theo}\label{theo:L1X-SSRG}
Let $X$ be a S$^2$WCG Banach space and let $Z \sub X$ be a subspace. Then $L^1(\nu,Z)$ has property~$(\mu^s_{\delta\mathcal{S}(\nu,Z)})$.
\end{theo}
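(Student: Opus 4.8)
The plan is to manufacture a single operator into $L^1(\nu,X)$ whose adjoint is Banach-Saks and whose image strongly generates all $\delta\mathcal{S}$-sets, apply Lemmas~\ref{lem:OneSet} and~\ref{lem:TransferMU-StrongGeneration}(ii), and only at the end descend from $X$ to the subspace $Z$. So I would first reduce to proving that $L^1(\nu,X)$ itself has property $(\mu^s_{\delta\mathcal{S}(\nu,X)})$. This reduction is clean: $X$ is S$^2$WCG, hence SWCG, hence WCG, and for a finite measure this forces $L^1(\nu,X)$ to be WCG, so $B_{(L^1(\nu,X))^*}$ is $w^*$-sequentially compact and every subspace of $L^1(\nu,X)$ is $w^*$-extensible. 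Moreover any $K\in\delta\mathcal{S}(\nu,Z)$ is still a $\delta\mathcal{S}$-set of $L^1(\nu,X)$, since weak compactness in~$Z$ implies weak compactness in~$X$; thus $\delta\mathcal{S}(\nu,Z)\sub\delta\mathcal{S}(\nu,X)$ as families of subsets of $L^1(\nu,Z)\sub L^1(\nu,X)$, and property $(\mu^s_{\delta\mathcal{S}(\nu,X)})$ of $L^1(\nu,X)$ is therefore inherited as $(\mu^s_{\delta\mathcal{S}(\nu,Z)})$. Remark~\ref{rem:subspaces} then transfers it to~$L^1(\nu,Z)$, which is the assertion of Theorem~\ref{theo:L1X-SSRG}.

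For the whole-space case I fix a super weakly compact operator $T:Y\to X$ with $wk(X)$ strongly generated by $T(B_Y)$, and consider the vectorization $\hat{T}:L^2(\nu,Y)\to L^1(\nu,X)$, $\hat{T}(g):=T\circ g$, which is bounded because $\nu$ is finite (Cauchy--Schwarz). The first task is to check that $\delta\mathcal{S}(\nu,X)$ is strongly generated by $\hat{T}(B_{L^2(\nu,Y)})$. Given $K\in\delta\mathcal{S}(\nu,X)$ and $\epsilon>0$, I use uniform integrability to pick $\delta>0$ with $\int_A\|f\|\,d\nu$ uniformly small over $f\in K$ whenever $\nu(A)\le\delta$, then a weakly compact $W\sub X$ with $\nu(f^{-1}(W))\ge 1-\delta$ for all $f\in K$; strong generation of $wk(X)$ by $T(B_Y)$ yields $m$ and $\epsilon'$ with $W\sub mT(B_Y)+\epsilon' B_X$. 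On $f^{-1}(W)$ I select (measurably) $g(\omega)\in mB_Y$ with $\|f(\omega)-T(g(\omega))\|\le\epsilon'$, and put $g:=0$ elsewhere; the bound $\|g\|_{L^2(\nu,Y)}\le m\,\nu(\Omega)^{1/2}$ is uniform in~$f$, and splitting the integral over $f^{-1}(W)$ and its complement gives $\|f-\hat{T}(g)\|_{L^1}\le\epsilon$. Hence $K\sub n\hat{T}(B_{L^2(\nu,Y)})+\epsilon B_{L^1(\nu,X)}$ with $n:=m\,\nu(\Omega)^{1/2}$, as required.

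The crux is to show that $\hat{T}^*$ is Banach-Saks; once this is in hand, Lemma~\ref{lem:OneSet} gives $(\mu^s_{\{\hat{T}(B_{L^2(\nu,Y)})\}})$ and Lemma~\ref{lem:TransferMU-StrongGeneration}(ii) upgrades it to $(\mu^s_{\delta\mathcal{S}(\nu,X)})$. Factoring $\hat{T}$ as $L^2(\nu,Y)\to L^2(\nu,X)\hookrightarrow L^1(\nu,X)$ (the inclusion is bounded since $\nu$ is finite), it suffices that the \emph{same-exponent} vectorization $g\mapsto T\circ g$ from $L^2(\nu,Y)$ to $L^2(\nu,X)$ be super weakly compact: then $\hat{T}$ is super weakly compact, hence so is $\hat{T}^*$ (\cite[Proposition~II.4]{bea1}), hence $\hat{T}^*$ is Banach-Saks (\cite[Th\'eor\`eme~3]{bea2}). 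I expect this to be the main obstacle, and it is essential that we use more than ``$T^*$ Banach-Saks'': Example~\ref{exa:Schachermayer} shows that the Banach-Saks property of $X^*$ alone does not suffice, so the argument must exploit the \emph{uniform} strength of super weak compactness, namely its stability under $L^2$-vectorization, a fact belonging to the theory of super weakly compact sets of Raja~\cite{raj9} and Cheng et al.~\cite{che2}. Equivalently, I would argue directly that $\hat{T}^*(\phi)=T^*\circ\phi$ takes values in the super weakly compact set $T^*(B_{X^*})$ and that the family of all functions in $L^2(\nu,Y^*)$ valued in a fixed super weakly compact set is Banach-Saks.

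Combining the three steps gives $(\mu^s_{\delta\mathcal{S}(\nu,X)})$ for $L^1(\nu,X)$, and the reduction of the first paragraph then yields $(\mu^s_{\delta\mathcal{S}(\nu,Z)})$ for $L^1(\nu,Z)$. The only genuinely delicate points are the measurable selection in the second paragraph --- handled by the essential separability of Bochner-measurable functions together with a standard selection theorem for the weakly-compact-valued multifunction $\omega\mapsto\{y\in mB_Y:\|f(\omega)-T(y)\|\le\epsilon'\}$ --- and, above all, the $L^2$-stability of super weak compactness used to make $\hat{T}^*$ Banach-Saks.
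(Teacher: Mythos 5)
Your proposal is correct, and its skeleton is the same as the paper's: reduce to the whole-space case via WCG-ness of $L^1(\nu,X)$, $w^*$-sequential compactness of the dual ball and Remark~\ref{rem:subspaces} (your explicit check that $\delta\mathcal{S}(\nu,Z)\sub\delta\mathcal{S}(\nu,X)$ is a point the paper leaves implicit); vectorize $T$ to $S:=i\circ\tilde{T}:L^2(\nu,Y)\to L^1(\nu,X)$; show $\delta\mathcal{S}(\nu,X)$ is strongly generated by $S(B_{L^2(\nu,Y)})$; deduce that $S^*$ is Banach-Saks from super weak compactness; and conclude by Lemmas~\ref{lem:OneSet} and~\ref{lem:TransferMU-StrongGeneration}(ii). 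The genuine divergence is the strong-generation step. The paper first invokes \cite{raj10} to replace $(Y,T)$ by a reflexive~$Y$ and an injective~$T$, then quotes \cite{rod13} to see that $\delta\mathcal{S}(\nu,X)$ is strongly generated by the set $H$ of functions valued a.e.\ in $T(B_Y)$, and quotes \cite{laj-rod} to lift each $h\in H$ \emph{exactly} to $g$ with $T\circ g=h$ --- reflexivity and injectivity are precisely what make the exact lifting work. You instead prove strong generation directly by an $\epsilon$-approximate selection, which is a real simplification: since you only need $K\sub n\hat{T}(B_{L^2(\nu,Y)})+\epsilon B_{L^1(\nu,X)}$, no exact lifting, and hence no appeal to \cite{raj10} or \cite{laj-rod}, is required. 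One slip there: the multifunction $\omega\mapsto\{y\in mB_Y:\|f(\omega)-T(y)\|\leq\epsilon'\}$ is \emph{not} weakly-compact-valued unless $Y$ is reflexive, so the selection theorem you gesture at does not apply as stated; but you do not need any selection theorem: take a countably-valued measurable $\tilde{f}$ with $\|f(\omega)-\tilde{f}(\omega)\|\leq\epsilon'$ a.e.\ (Pettis measurability), for each value $x_j$ of~$\tilde{f}$ attained on $f^{-1}(W)$ pick one $y_j\in mB_Y$ with $\|x_j-T(y_j)\|\leq 2\epsilon'$, and let $g$ be the resulting countably-valued function (zero off $f^{-1}(W)$); this gives the same estimate with no separability issues. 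Finally, the ``main obstacle'' you flag --- super weak compactness of $g\mapsto T\circ g$ on~$L^2$ --- is indeed the crux, and it is a known theorem of Beauzamy, \cite[p.~126, Corollaire]{bea1}, which is exactly what the paper cites; with that reference your argument closes completely, and your fallback formulation (bounded sets of functions valued in a fixed super weakly compact set are super weakly compact, hence Banach-Saks) is just the set version of the same fact.
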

\begin{proof} 
The space $L^1(\nu,X)$ is weakly compactly generated because $X$ is
(see e.g. \cite[p.~252, Corollary~11]{die-uhl-J}). Therefore,
as in the proof of Theorem~\ref{theo:Generation}, it suffices to check that $L^1(\nu,X)$ has property~$(\mu^s_{\delta\mathcal{S}(\nu,X)})$.

Let $Y$ be a Banach space which strongly generates~$X$ through a super weakly compact operator $T:Y \to X$.
We can assume that $Y$ is reflexive and $T$ is injective, according to Theorem~4.5 and Proposition~4.6 in~\cite{raj10}.
Then $\delta\mathcal{S}(\nu,X)$ is strongly generated by 
$$
	H:=\{h\in L^1(\nu,X): \, h(\omega)\in T(B_Y) \mbox{ for }\mu\mbox{-a.e. }\omega\in \Omega\}
$$
(see \cite[proof of Theorem~2.7]{rod13}). 
Let $\tilde{T}:L^2(\nu,Y)\to L^2(\nu,X)$ be 
the ``composition'' operator defined by 
$$
	\tilde{T}(f):=T\circ f
	\quad\mbox{for all }f\in L^2(\nu,Y).
$$ 
Let $i:L^2(\nu,X)\to L^1(\nu,X)$ be the identity operator and define $S:=i\circ \tilde{T}$.
Since $Y$ is reflexive and $T$ is injective, we have $H \sub S(\rho B_{L^2(\nu,Y)})$ for $\rho:=(\nu(\Omega))^{1/2}$ (see \cite[proof of Theorem~1]{laj-rod}),
hence $\delta\mathcal{S}(\nu,X)$ is strongly generated by $S(B_{L^2(\nu,Y)})$. 

Since $T$ is super weakly compact,
so is $\tilde{T}$ (see \cite[p.~126, Corollaire]{bea1}), hence
$S$ is super weakly compact as well. Therefore, $S^*$ is super weakly compact
(see \cite[Proposition~II.4]{bea1}) and so
$S^*$ is Banach-Saks (see \cite[Th\'{e}or\`{e}me~3]{bea2}). From Lemmas~\ref{lem:OneSet}
and~\ref{lem:TransferMU-StrongGeneration}(ii) we conclude that $L^1(\nu,X)$
has property~$(\mu^s_{\delta\mathcal{S}(\nu,X)})$.
\end{proof}

Our final example shows that, in the statement of Theorem~\ref{theo:L1X-SSRG}, the S$^2$WCG property  
cannot be replaced by the Banach-Saks property of the dual.
It is known (and not difficult to check) that if $X$ is a reflexive Banach space, then every relatively weakly compact subset of~$L^1(\nu,X)$ is a
$\delta\mathcal{S}$-set, so in this particular setting properties $(\mu^s)$ and~$(\mu^s_{\delta\mathcal{S}})$ are equivalent. 

\begin{exa}\label{exa:Schachermayer}
There exists a Banach space~$X$ such that $X^*$ has the Banach-Saks property but $L^1([0,1],X)$ 
fails property $(\mu^s)$.
\end{exa}
\begin{proof}
Schachermayer~\cite{sch-4} constructed an example of a Banach space~$E$ having the Banach-Saks property
such that $L^2([0,1],E)$ does not have it. The failure of the property is witnessed by a {\em uniformly bounded}
weakly null sequence $(f_n)_n$ in $L^2([0,1],E)$ (see \cite{sch-4}, proof of Proposition~3).

Set $X:=E^*$ and let $i:L^2([0,1],X)\to L^1([0,1],X)$ be the identity operator.  
Since $E$ is reflexive, the same holds for $L^2([0,1],E)$ and we have
$$
	L^2([0,1],E)^*=L^2([0,1],X) \quad\mbox{and}\quad L^1([0,1],X)^*=L^\infty([0,1],E),
$$
see e.g. \cite[IV.1]{die-uhl-J}.
Moreover, $(f_n)_n$ is a $w^*$-null sequence in $L^1([0,1],X)^*$.
No subsequence $(f_{n_k})_k$ is Ces\`{a}ro convergent to~$0$ in the norm of~$L^2([0,1],E)$, so it cannot be
Ces\`{a}ro convergent to~$0$ uniformly on the weakly compact set $i(B_{L^2([0,1],X)})$. This shows that
$L^1([0,1],X)$ fails property $(\mu^s)$.
\end{proof}

\subsection*{Acknowledgements}
This research was supported by projects MTM2014-54182-P and MTM2017-86182-P (AEI/FEDER, UE).

\providecommand{\MR}{\relax\ifhmode\unskip\space\fi MR }

\bibliographystyle{amsplain}

\end{document}